\newtheorem {theorem}{Theorem}[section]
\newtheorem {proposition}{Proposition}[section]
\newtheorem {example}{Example}[section]
\newtheorem {definition}{Definition}[section]
\newtheorem {remark}{Remark}[section]
\newcommand{\conv}{\mbox{\rm conv}\,}
\newcommand{\cl}{\mbox{\rm cl}\,}
\def\ar{a\kern-.370em\raise.16ex\hbox{\char95\kern-0.53ex\char'47}\kern.05em}
\def\ees{{\accent"5E e}\kern-.385em\raise.2ex\hbox{\char'23}\kern-.08em}
\def\eex{{\accent"5E e}\kern-.470em\raise.3ex\hbox{\char'176}}
\def\AR{A\kern-.46em\raise.80ex\hbox{\char95\kern-0.53ex\char'47}\kern.13em}
\def\EES{{\accent"5E E}\kern-.5em\raise.8ex\hbox{\char'23 }}
\def\EEX{{\accent"5E E}\kern-.60em\raise.9ex\hbox{\char'176}\kern.1em}
\def\ow{o\kern-.42em\raise.82ex\hbox{
		\vrule width .12em height .0ex depth .075ex \kern-0.16em \char'56}\kern-.07em}
\def\OW{O\kern-.460em\raise1.36ex\hbox{
		\vrule width .13em height .0ex depth .075ex \kern-0.16em \char'56}\kern-.07em}
\def\UW{U\kern-.42em\raise1.36ex\hbox{
		\vrule width .13em height .0ex depth .075ex \kern-0.16em \char'56}\kern-.07em}
\def\DD{D\kern-.7em\raise0.4ex\hbox{\char '55}\kern.33em}
\title[]{Strong Second-Order Karush--Kuhn--Tucker Optimality Conditions for Vector Optimization}
\author{NGUYEN QUANG HUY}
\address{Department of Mathematics, Hanoi Pedagogical University 2, Xuan Hoa, Phuc Yen, Vinh Phuc, Vietnam}
\email{huyngq308@gmail.com}
\author{DO SANG KIM$^\dagger$}
\address{Department of Applied Mathematics, Pukyong National University, Busan 48513, Korea}
\email{dskim@pknu.ac.kr}
\author{NGUYEN VAN TUYEN$^\ddagger$}
\address{Department of Mathematics, Hanoi Pedagogical University 2, Xuan Hoa, Phuc Yen, Vinh Phuc, Vietnam}
\email{tuyensp2@yahoo.com; nguyenvantuyen83@hpu2.edu.vn}
\thanks{$^\dagger$The research of the second author was supported by the National Research Foundation of Korea Grant funded by the Korean Government (NRF-2016R1A2B4011589)}
\thanks{$^{\ddagger}$The research of the third author was supported by the National Foundation for Science and Technology Development (NAFOSTED), Vietnam grant 101.01-2014.39}
\keywords{Symmetric second-order subdifferential~$\cdot$~Abadie second-order  regularity condition~$\cdot$~Geoffrion properly efficient point~$\cdot$~Second-order Karush--Kuhn--Tucker optimality condition}
\subjclass{49K30 $\cdot$ 49J52  $\cdot$ 49J53 $\cdot$ 90C29$\cdot$ 90C46}
\date{ \today}
\begin{document}
\maketitle

\begin{abstract}
	In the present paper, we focus on the vector optimization problems  with inequality constraints, where objective functions and constrained functions are vector-valued functions with $C^{1,1}$ components defined on $\mathbb{R}^n$. By using the second-order symmetric subdifferential and the second-order tangent set, we propose two types of second-order regularity conditions in the sense of Abadie. Then we establish some strong second-order Karush--Kuhn--Tucker   necessary optimality conditions for Geoffrion properly efficient solutions of the considered problem. Examples are given to illustrate the obtained results.

\end{abstract}

\section{Introduction}
In this paper, we are interested in second-order optimality conditions for the following constrained vector optimization  problem
\begin{align*}
& \text{min}\, f(x)\label{problem} \tag{VP}
\\
&\text{s. t.}\ \ x\in Q_0:=\{x\in\mathbb{R}^n\,:\, g(x)\leqq 0\},
\end{align*}
where $f:=(f_i)$, $i\in I:=\{1, \ldots, l\}$, and $g:=(g_j)$, $j\in J:=\{1, \ldots, m\}$  are vector-valued functions with $C^{1,1}$ components defined on $\mathbb{R}^n$. Recall that a real-valued function $\varphi$ is said to be a $C^{1,1}$ function if it is Fr\'echet differentiable with a locally Lipschitz gradient $\nabla \varphi(\,\cdot\,)$ on $\mathbb{R}^n$; see \cite{hien84} for more details.

It is well-known that if $x^0\in Q_0$ is an efficient solution to \eqref{problem}, then there exist Lagrange multipliers $(\lambda, \mu)\in \mathbb{R}^l\times\mathbb{R}^m$ satisfying
\begin{align}
&\sum_{i=1}^l\lambda_i\nabla f_i(x^0)+\sum_{j=1}^m\mu_j\nabla g_j(x^0)=0,\label{equa_intro:1}
\\
&\mu=(\mu_1, \ldots, \mu_m)\geqq 0, \mu_jg_j(x^0)=0,\label{equa_intro:2}
\\
&\lambda=(\lambda_1, \ldots, \lambda_l)\geqq 0, (\lambda, \mu)\neq 0;\label{equa_intro:3}
\end{align}
see \cite[Theorem 7.4]{Jahn04}. The conditions \eqref{equa_intro:1}--\eqref{equa_intro:3} are called the first-order F.-John necessary optimality conditions. If $\lambda$ is nonzero, then this conditions are called the Karush--Kuhn--Tucker $(KKT)$  optimality  conditions.  $(KKT)$ optimality conditions are one of the most important results in optimization theory. In vector optimization problems, there are two kinds of $(KKT)$ optimality conditions. When all Lagrange multipliers corresponding to the objective functions are positive, we say that {\em strong first-order Karush--Kuhn--Tucker $(SFKKT)$} conditions hold. On the other hand, when at least one of Lagrange multipliers, corresponding to the objective functions, is positive, we say that {\em weak first-order Karush--Kuhn--Tucker $(WFKKT)$} conditions hold for the problem.  In this case, some Lagrange multipliers corresponding to the components of the vector objective function may be zero. This means that some components of the vector-valued objective function have no role in the necessary conditions. To avoid this situation and to obtain positive Lagrange multipliers associated with each of the objective functions, the problem has to fulfill some assumptions. These assumptions are called constraint qualifications $(CQ)$ when they have to be fulfilled by the constraints of the problem, and they are called regularity conditions $(RC)$ when they have to be fulfilled by both the objectives and the constraints of the problem.

The optimality conditions for vector problems, which use similar $(CQ)$'s as those used for single-objective problems do not ensure $(SFKKT)$ conditions; see \cite{Kuhn52,Singh87}. In 1994, Maeda \cite{Maeda94} was the first to introduce a Generalized Guignard regularity condition and established   $(SFKKT)$ necessary conditions for differentiable problems. Later on, Preda and Chi{\c{t}}escu \cite{Preda99}  derived  $(SFKKT)$ necessary conditions for  efficient solutions  of   semidifferentiable vector optimization problems.  In the recent years, there are many works dealing with  $(SFKKT)$ necessary optimality conditions  for smooth and nonsmooth vector optimization problems; see, for example, \cite{Rizvi12,Giorgi 09,Golestani13,Chuong14}. 

One of the first investigations to obtain second-order (KKT)   optimality conditions for smooth vector optimization problems was carried out by Wang \cite{Wang91}. Then, Bigi and Castellani \cite{Bigi00,Bigi04} obtained some weak second-order (KKT) optimality conditions by introducing some types of the second-order regularity conditions. On the line of their work, many authors have derived weak second-order (KKT) necessary conditions for efficiency in vector optimization problems both for smooth and nonsmooth cases; see, for example, \cite{novo09,novo03,Khanh16,Tuan15,Guerraggio01,Tuan16,Aghezzaf99,Aghezzaf07}. However, to the best of our knowledge, there are only a few works considering  strong second-order Karush--Kuhn--Tucker $(SSKKT)$ necessary optimality conditions. In \cite{Maeda04}, Maeda was the first to propose a Abadie second-order regularity condition $(ASORC)$ and established  $(SSKKT)$ necessary conditions in terms of generalized second-order directional derivatives for $C^{1,1}$ vector optimization problems. Recently, Kim and Tuyen \cite{Kim17} obtained some $(SSKKT)$ necessary optimality conditions for Geoffrion properly efficient solutions of $C^2$ vector optimization problems under the so-called  generalized  Abadie second-order regularity condition $(GASORC)$. The $(GASORC)$ was first introduced by Rizvi and Nasser in \cite{Rizvi06}. As shown in \cite{Rizvi06}, this condition is weaker than the condition $(ASORC)$.

Our aim is to extend \cite[Theorem 3.2]{Kim17} to a larger class of vector optimization problems. By using the second-order symmetric subdifferential, which was introduced in \cite{Huy16}, we propose two types of second-order regularity conditions in the sense of Abadie for \eqref{problem}. These regularity conditions generalize corresponding regularity conditions in \cite{Rizvi06} to $C^{1,1}$ vector optimization problems. Then we establish some $(SSKKT)$ necessary optimality conditions in terms of second-order symmetric subdifferentials  for Geoffrion properly efficient solutions of \eqref{problem}. As shown in \cite{Huy16}, the second-order symmetric subdifferential may be strictly smaller than the Clarke  subdifferential, and  has some nice properties. In particular, every $C^{1,1}$ function has Taylor expansion in terms of its second-order symmetric subdifferential. This property plays an important role in
our paper.

The rest of the paper is organized as follows.  In Section \ref{Preliminaries}, we recall some basic  definitions and preliminaries from variational analysis, which are widely used in the  sequel. Section \ref{Regularity} is devoted to investigate  second-order regularity conditions in the sense of Abadie for vector optimization problems. In Section \ref{Strong_KT_section}, we establish some $(SSKKT)$ optimality conditions for Geoffrion properly efficient solutions of \eqref{problem}. Section \ref{Conclusion}  draws some conclusions.  
\section{Preliminaries}
\label{Preliminaries}
In this section, we recall some basic  definitions and preliminaries from variational analysis, which are widely used in the  sequel. Let $\Omega$ be a subset in $\mathbb{R}^n$. The  {\it closure}, {\it convex hull} and {\it conic hull} of $\Omega$ are denoted, respectively, by $\mbox{cl}\,\Omega$,
$\conv\,\Omega$ and $\mbox{cone}\,\Omega$. 

\begin{definition}{\rm
		Given $\bar x\in  \mbox{cl}\Omega$. The  {\em limiting normal cone} or the {\em Mordukhovich normal cone}  of $\Omega$ at $\bar x$  is the set
		\begin{equation*}
		N(\bar x;\Omega):=\{z^*\in \mathbb{R}^n:\exists
		x^k\stackrel{\Omega}\longrightarrow\bar x, \epsilon_k\downarrow 0, z^*_k\to z^*,
		z^*_k\in {\widehat N_{\epsilon
				_k}}(x^k; \Omega),\,\,\forall k \in\mathbb{N}\},
		\end{equation*}
		where
		\begin{equation*}
		\Hat N_\epsilon  (x; \Omega):= \left\{ {z^*  \in {\mathbb{R}^n} \;:\;\limsup_{u\overset{\Omega} \rightarrow x}
			\frac{{\langle z^* , u - x\rangle }}{{\parallel u - x\parallel }} \leq \epsilon } \right\}
		\end{equation*}
		is the set of  {\em $\epsilon$-normals} of $\Omega$  at $x$ and the notation $u\xrightarrow {{\Omega}} x$ means that $u \rightarrow x$ and $u \in \Omega$.
	}
\end{definition}

\begin{definition}{\rm Let $C$ be a nonempty subset of $\mathbb{R}^n$, $x^0\in C$ and $u\in\mathbb{R}^n$. 
		\begin{enumerate}
			\item [(i)] The {\em tangent cone} to $C$ at $x^0\in C$ is the set defined by 
			$$T(C; x^0):=\{d\in\mathbb{R}^n\,:\,\exists t_k\,\downarrow 0, \exists d^k\to d, x^0+t_kd^k\in C, \ \ \forall k\in \mathbb{N}\}.$$	
			\item [(ii)]	The {\em second-order tangent set} to $C$ at $x^0$ with respect to the direction $u$ is the set defined by 
			$$T^2(C; x^0, u):=\left\{v\in\mathbb{R}^n:\exists t_k\,\downarrow 0, v^k\to v, x^0+t_ku+\frac12t_k^2v^k\in C, \, \forall k\in\mathbb{N}\right\}.$$
		\end{enumerate}		
	}	
\end{definition}
From the definition, we have $T^2(C; x^0, 0)=T(C; x^0)$.  Clearly, $T(\cdot\,; x^0)$ and $T^2(\cdot\,; x^0, u)$ are isotone, i.e., if $C^1\subset C^2$, then
\begin{align*}
T(C^1; x^0)&\subset T(C^2; x^0),
\\
T^2(C^1; x^0, u)&\subset  T^2(C^2; x^0, u).
\end{align*}

It is well-known that $T(C; x^0)$ is a nonempty closed cone. For each $u\in \mathbb{R}^n$, the set $T^2(C; x^0, u)$ is closed, but may be empty. However, we see that the set $T^2(C; x^0, 0)=T(C; x^0)$ is always nonempty.  If $C$ is convex, then
$$T(C; x^0)=\mbox{cl}\, \left\{d\,:\,d=\beta (x-x^0), x\in C, \beta\geq 0\right\},$$
and for each $u\in T(C; x^0)$ one has
$$
T^2(C; x^0, u)\subset \mbox{cl cone}[ \mbox{ cone}(C-x^0)-u].
$$
Moreover, if $C$ is a polyhedral convex set, then $ T^2(C; x^0, u)= T({T(C; x^0)};u).$

Let  $\varphi\colon \mathbb{R}^n \to \overline{\mathbb{R}}$ be an {\it extended-real-valued function}. The {\it  epigraph}, {\it  hypergraph} and  {\it domain} of $\varphi$ are denoted, respectively, by
\begin{align*}
\mbox{epi }\varphi&:=\{(x, \alpha)\in\mathbb{R}^n\times\mathbb{R} \,:\,  \alpha\geq \varphi(x) \},
\\
\mbox{hypo }\varphi &:=\{(x, \alpha)\in\mathbb{R}^n\times\mathbb{R} \,:\, \alpha\leq \varphi(x) \}, 
\\
\mbox{dom }\varphi &:= \{x\in \mathbb{R}^n \,:\, |\varphi(x)|<+\infty \}.
\end{align*}

\begin{definition}{\rm
		Let $\bar x\in \mbox{dom }\varphi$. The set
		\begin{equation*} 
		\partial \varphi (\bar x):=\{x^*\in \mathbb{R}^n \,:\, (x^*, -1)\in N((\bar x, \varphi (\bar x)); \mbox{epi }\varphi )\}
		\end{equation*}
		is called the {\it limiting subdifferential}, or the {\em Mordukhovich subdifferential}, of $\varphi$ at $\bar x$. If $\bar x\notin \mbox{dom }\varphi$, then we put $\partial \varphi (\bar x)=\emptyset$.
	}
\end{definition}

\begin{definition}{\rm (see \cite[p. 84]{Mor06a})
		Given $\bar x\in \mbox{dom }\varphi.$ The sets
		\begin{align*}
		\partial ^+ \varphi (\bar x)&:=\{x^*\in \mathbb{R}^n \,:\, (-x^*, 1)\in N((\bar x, \varphi (\bar x)); \mbox{hypo }\varphi )\},
		\\
		\partial _S \varphi (\bar x)&:=\partial \varphi (\bar x)\cup \partial^+ \varphi (\bar x), 
		\\
		\partial _C \varphi (\bar x)&:=\cl\conv \partial _S \varphi (\bar x)
		\end{align*}
		are called the {\em upper subdifferential}, the {\em symmetric subdifferential} and the {\em Clarke subdifferential} of $\varphi$ at $\bar x$, respectively.
	}
\end{definition}

We note here that 
$$\partial \varphi (\bar x) \subseteq \partial _S \varphi (\bar x) \subseteq \partial _C \varphi (\bar x),$$
and both inclusions may be strict, see \cite[pp. 92--93]{Mor06a}.

Let $D$ be an open subset of $\mathbb{R}^n$. We denote by  $C^{1, 1}(D)$ the class of all real-valued functions $\varphi$, which are Fr\'echet differentiable on $D$, and whose gradient mapping $\nabla \varphi(\cdot)$ is  locally  Lipschitz on $D$.

\begin{definition}{\rm (see \cite[Definition 2.6]{Huy16})
		Let $\varphi\in C^{1, 1}(D)$ and $\bar x\in D$. The {\em second-order  symmetric subdifferential} of $\varphi$ at $\bar x$ is a multifunction $$\partial_S^2\varphi(\bar x)\colon \mathbb{R}^n \rightrightarrows \mathbb{R}^n$$  defined by
		\begin{equation*}
		\partial_S^2\varphi (\bar x)(v):=\partial_S\langle v, \nabla \varphi \rangle (\bar x)=\partial\langle v, \nabla \varphi \rangle (\bar x) \cup \partial^+\langle v, \nabla \varphi \rangle (\bar x), \ \ \forall v\in \mathbb{R}^n.
		\end{equation*}
	}
\end{definition}

We now summarize some properties of  the second-order  symmetric subdifferential that will be needed in this paper.
\begin{proposition}\label{second-order_subdiff}{\rm (see \cite[Proposition 2.3]{Huy16})}
	Let $\varphi\in C^{1,1}(D)$ and $\bar x\in D$. The following assertions hold:
	\begin{enumerate}
		\item  [\rm(i)] For any $\lambda\in \mathbb{R}$ and $v\in \mathbb{R}^n$, we have
		$$
		\partial_S^2\varphi (\bar x)(\lambda v)=\partial_S^2(\lambda \varphi) (\bar x)(v)=\lambda\partial_S^2\varphi (\bar x)(v).
		$$
		\item  [\rm(ii)] For any $v\in\mathbb{R}^n$, $\partial_S^2\varphi (\bar x)(v)$ is a nonempty compact set in $\mathbb{R}^n$.
		\item  [\rm(iii)] For any $v\in\mathbb{R}^n$ the mapping $x\mapsto \partial_S^2\varphi (x)(v)$ is locally bounded. Moreover, if $x_k\to \bar x$, $x^*_k\to x^*$ and $x^*_k\in \partial_S^2\varphi ( x_k)(v)$ for all $k\in\mathbb{N}$, then $x^*\in \partial_S^2\varphi (\bar x)(v)$.
	\end{enumerate}
\end{proposition}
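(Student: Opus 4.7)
The plan is to reduce all three claims to standard facts about the Mordukhovich subdifferential of a locally Lipschitz real-valued function, applied to the scalar function $\psi_v(x):=\langle v,\nabla\varphi(x)\rangle$. Since $\varphi\in C^{1,1}(D)$, the gradient $\nabla\varphi$ is locally Lipschitz on $D$, hence $\psi_v$ is locally Lipschitz on $D$ for every fixed $v\in\mathbb{R}^n$. This single regularity property is the engine driving each assertion.

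For (i), I first check positive homogeneity: for $\lambda\geq 0$ one has $\psi_{\lambda v}=\lambda\psi_v$, and both $\partial$ and $\partial^+$ satisfy $\partial(\lambda f)=\lambda\partial f$, $\partial^+(\lambda f)=\lambda\partial^+ f$. For $\lambda<0$, I would invoke the reflection identity $\partial(-f)(\bar x)=-\partial^+ f(\bar x)$, which follows directly from the definitions via the symmetry between $\epi f$ and $\text{hypo}(-f)$ under the map $(x,\alpha)\mapsto(x,-\alpha)$. Writing $\lambda=-|\lambda|$ and combining gives $\partial\psi_{\lambda v}=\lambda\partial^+\psi_v$ and $\partial^+\psi_{\lambda v}=\lambda\partial\psi_v$; taking unions yields $\partial_S\psi_{\lambda v}=\lambda\partial_S\psi_v$ for every $\lambda\in\mathbb{R}$. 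The middle equality $\partial_S^2(\lambda\varphi)(\bar x)(v)=\lambda\partial_S^2\varphi(\bar x)(v)$ is then immediate from $\nabla(\lambda\varphi)=\lambda\nabla\varphi$ and the bilinearity of $\langle\cdot,\cdot\rangle$.

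For (ii), the key fact is that the Mordukhovich subdifferential of a locally Lipschitz function on $\mathbb{R}^n$ is nonempty and compact at every point of its domain (a standard consequence of the definition through $\varepsilon$-normals together with the Lipschitz estimate, see \cite{Mor06a}). Applied to $\psi_v$, this gives $\partial\psi_v(\bar x)$ nonempty and compact; the same holds for $\partial^+\psi_v(\bar x)=-\partial(-\psi_v)(\bar x)$ by the reflection identity. Their union is therefore a nonempty compact subset of $\mathbb{R}^n$.

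For (iii), local boundedness of $x\mapsto\partial_S^2\varphi(x)(v)$ follows because both $\partial\psi_v$ and $\partial^+\psi_v$ are locally bounded multifunctions for the locally Lipschitz function $\psi_v$. To prove the outer semicontinuity statement, I would extract from the sequence $x_k^*\in\partial_S\psi_v(x_k)$ a subsequence that lies entirely in one of the two subdifferentials (either $\partial\psi_v(x_k)$ or $\partial^+\psi_v(x_k)$), and then pass to the limit using the closed-graph property of that particular subdifferential, which is again standard for locally Lipschitz functions on $\mathbb{R}^n$. I expect the only mildly delicate point of the whole argument to be exactly this subsequence-splitting step in (iii): outer semicontinuity of a union of two outer semicontinuous multifunctions is not automatic, but it does work here because we can split along the two branches and apply the known closed-graph property branch-by-branch.
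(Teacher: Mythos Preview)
Your argument is correct and follows the standard route through properties of the Mordukhovich subdifferential for locally Lipschitz functions; each of the three parts is handled soundly, including the subsequence-splitting step in (iii), which is indeed the right way to pass from branchwise outer semicontinuity to outer semicontinuity of the union.

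Note, however, that the paper does not supply its own proof of this proposition: it is stated with a reference to \cite[Proposition~2.3]{Huy16} and used as a black box thereafter. So there is no in-paper argument to compare against; what you have written is essentially a reconstruction of the proof one would expect to find in \cite{Huy16}, and it is adequate for that purpose.
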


\begin{theorem}[Taylor's formula]{\rm (see \cite[Corollary 2.1]{Huy16})} Let $\varphi \in C^{1, 1}(\mathbb{R}^n)$. Then, for every $a, b\in \mathbb{R}^n$ there exists $z^*\in \partial \varphi_S^2(\xi)(b-a)$, where $\xi\in\,(a, b)$, such that
	$$
	\varphi(b)-\varphi(a)-\langle \nabla \varphi (a), b-a \rangle =\dfrac12\langle z^*, b-a \rangle.
	$$
\end{theorem}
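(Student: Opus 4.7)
The plan is to reduce the identity to a one-dimensional Rolle argument combined with a symmetric-subdifferential mean value theorem applied along the segment $[a,b]$. Set $\alpha := \varphi(b) - \varphi(a) - \langle \nabla\varphi(a), b-a\rangle$ and introduce
\begin{equation*}
F(t) := \varphi(a + t(b-a)) - \varphi(a) - t\langle \nabla\varphi(a), b-a\rangle - t^2 \alpha,\qquad t\in[0,1].
\end{equation*}
Since $\varphi\in C^{1,1}$, the function $F$ is $C^{1,1}$ on $[0,1]$ with $F(0)=F(1)=0$, so Rolle's theorem yields $t_1\in(0,1)$ with $F'(t_1)=0$, i.e.,
\begin{equation*}
\langle \nabla\varphi(a+t_1(b-a))-\nabla\varphi(a),\,b-a\rangle = 2t_1\alpha.
\end{equation*}

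Next, I would set $\Psi(x):=\langle b-a,\nabla\varphi(x)\rangle$, which is locally Lipschitz on $\mathbb{R}^n$, and establish an equality-form mean value theorem for $\Psi$ on $[a,\,a+t_1(b-a)]$ using the symmetric subdifferential: there exist $\xi\in(a,\,a+t_1(b-a))\subseteq(a,b)$ and $z^*\in\partial_S\Psi(\xi)$ such that
\begin{equation*}
\Psi(a+t_1(b-a))-\Psi(a)= t_1\langle z^*, b-a\rangle.
\end{equation*}
For this I would consider $\eta(s):=\Psi(a+s(b-a))$ on $[0,t_1]$ together with $h(s):=\eta(s)-\frac{\eta(t_1)-\eta(0)}{t_1}s$, which satisfies $h(0)=h(t_1)$. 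Either $h$ is constant (and the claim is immediate) or $h$ attains a strict interior extremum at some $s_0\in(0,t_1)$: at a local minimum the Fermat rule for the limiting subdifferential gives $0\in\partial h(s_0)$, and at a local maximum the dual Fermat rule gives $0\in\partial^+ h(s_0)$, so in every case $0\in\partial_S h(s_0)$. The sum rule against the smooth affine term then yields $\frac{\eta(t_1)-\eta(0)}{t_1}\in\partial_S\eta(s_0)$, and the chain rule for the composition of a Lipschitz function with a $C^1$ curve furnishes the inclusion $\partial_S\eta(s_0)\subseteq\{\langle z^*,b-a\rangle:z^*\in\partial_S\Psi(\xi)\}$ with $\xi:=a+s_0(b-a)$.

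Chaining the two steps gives $2t_1\alpha = t_1\langle z^*, b-a\rangle$, hence $\alpha=\frac{1}{2}\langle z^*, b-a\rangle$. By the very definition of the second-order symmetric subdifferential recalled in the excerpt,
\begin{equation*}
\partial_S\Psi(\xi) = \partial_S\langle b-a,\nabla\varphi\rangle(\xi) = \partial_S^2\varphi(\xi)(b-a),
\end{equation*}
so the claimed Taylor identity follows with $\xi\in(a,b)$.

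I expect the principal obstacle to be the symmetric mean value step. The upper- and lower-subdifferential branches must be handled in parallel so that the resulting covector genuinely lies in $\partial_S\Psi$ rather than in the strictly larger $\partial_C\Psi$; invoking Lebourg's classical theorem would only produce a convex combination of symmetric subgradients and would lose the sharpness of $\partial_S$ that distinguishes this Taylor formula from its Clarke-subdifferential counterpart.
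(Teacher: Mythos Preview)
The paper does not supply its own proof of this theorem; it is quoted from \cite{Huy16} as a preliminary tool. Your argument is therefore not competing with anything in the present paper, and on its own terms it is sound: the Rolle step on $F$ is clean, and the symmetric mean value theorem you extract for $\Psi=\langle b-a,\nabla\varphi\rangle$ along the segment is exactly what is needed. The only place requiring a word of care is the chain rule inclusion $\partial_S\eta(s_0)\subseteq\{\langle z^*,b-a\rangle: z^*\in\partial_S\Psi(\xi)\}$; this follows by applying the Mordukhovich chain rule for a Lipschitz outer function composed with a strictly differentiable inner map separately to $\partial$ and to $\partial^+=-\partial(-\,\cdot\,)$, and you should state this explicitly rather than leave it implicit. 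With that clarification your proof is complete and correctly avoids the Lebourg/Clarke route, so that the resulting $z^*$ genuinely lies in $\partial_S^2\varphi(\xi)(b-a)$ rather than merely in its convex hull.
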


\section{Second-Order Abadie Regularity Conditions}
\label{Regularity}
In this section, we propose some types of second-order regularity conditions in the sense of Abadie for  vector optimization problems, investigate some relations with the regularity conditions in \cite{Maeda04,Rizvi06}, and give some conditions which assure  that these regularity conditions hold true. 

We first recall notations and definitions which will be used in the sequel. Let $\mathbb{R}^l$ be the $l$-dimensional Euclidean space. For $a, b\in\mathbb{R}^l$, by $a\leqq b$, we mean $a_i\leq b_i$ for all $i=1, \ldots, l$; by $a\leq b$, we mean $a\leqq b$ and $a\neq b$; and by $a<b$, we mean $a_i<b_i$ for all $i=1, \ldots, l$. For any two vectors $a=(a_1, a_2)$ and $b=(b_1, b_2)$ in $\mathbb{R}^2$, we denote the lexicographic order by
\begin{align*}
a&\leqq_{\rm lex} b,\ \  {\rm iff} \ \ a_1<b_1\ \   {\rm or} \ \  a_1=b_1\ \ {\rm and }\ \   a_2\leq b_2,
\\
a&<_{\rm lex} b,\ \  {\rm iff} \ \ a_1<b_1\ \   {\rm or} \ \  a_1=b_1\ \ {\rm and }\ \   a_2< b_2.
\end{align*}

Fix $x^0\in Q_0$, the {\em active index set} at $x^0$ is defined by
$$J(x^0):=\{j\in J\,:\,g_j(x^0)=0\}.$$
For each $u\in\mathbb{R}^n$, put  $J(x^0; u):=\{j\in J(x^0)\,:\, \langle \nabla g_j(x^0), u\rangle=0\}.$ We say that $u$ is a {\em critical direction} of problem \eqref{problem}  at $x^0\in Q_0$   iff 
\begin{align*}
\langle \nabla f_i(x^0), u\rangle&= 0, \ \ \ \forall i\in I,
\\
\langle \nabla g_j(x^0), u\rangle&\leq 0, \ \ \ \forall j\in J(x^0).
\end{align*}
The set of all critical direction of the problem \eqref{problem} at $x^0$ is denoted by $K(x^0)$. The following sets were introduced by Maeda \cite{Maeda04}:
\begin{align*}
Q^i&:=Q_0\cap\{x\in \mathbb{R}^n\,:\, f_k(x)\leq f_k(x^0), k\in I\setminus\{i\}\}, \ \ i \in I,\\
Q&:=Q_0\cap\{x\in \mathbb{R}^n\,:\, f_k(x)\leq f_k(x^0), k\in I\}.
\end{align*}
If $I=\{1\}$,  we set $Q^i:=Q_0$. The  sets  were proposed by Rizvi and Nasser \cite{Rizvi06}.
$$
M^i:=Q_0\cap\{x\in \mathbb{R}^n\,:\, f_i(x)\leq f_i(x^0)\},\ \ \ i\in I.
$$
Clearly, $\displaystyle Q^i=\mathop{\bigcap\limits_{k\in I}}\limits_{k\neq i} M^k$ for all $i\in I$ and $\displaystyle Q=\bigcap_{i\in I}Q^i=\bigcap_{i\in I}M^i.$
\begin{remark}{\rm
	By the isotony of $T(\cdot\,; x^0)$ and $T^2(\cdot\,; x^0, u)$, we have
	$$T(Q^i; x^0)\subset T(M^k; x^0), 	T^2(Q^i; x^0, u)\subset T^2(M^k; x^0, u),\ \ \ \forall k\in I\setminus \{i\}.$$
	Thus
	\begin{equation*}
	\bigcap_{i\in I}T(Q^i; x^0)\subset \bigcap_{i\in I}T(M^i; x^0)\ \ \mbox{and}\ \
	\bigcap_{i\in I}T^2(Q^i; x^0, u)\subset \bigcap_{i\in I} T^2(M^i; x^0, u). 	\end{equation*}	}
\end{remark}

The following example shows that the above inclusions may be strictly proper.
\begin{example}\label{strict_inclusion}{\rm Let $f\colon \mathbb{R}^2 \to\mathbb{R}^3$ and $g\colon \mathbb{R}^2\to \mathbb{R}$ be two maps defined by
	\begin{align*}
	f(x)&:= (f_1(x), f_2(x), f_3(x))=(x_2, x_1+x_2^2, -x_1-x_1|x_1|+x_2^2)\\
	g(x)&:=0, \ \ \forall x=(x_1, x_2)\in\mathbb{R}^2. 
	\end{align*}
	Clearly, $Q_0=\mathbb{R}^2$ and $x^0=(0,0)$ is a feasible point to problem \eqref{problem}. We have
	\begin{align*}
	&M^1=\{(x_1, x_2)\,:\, x_2\leq 0\}, M^2=\{(x_1, x_2)\,:\, x_1+x_2^2\leq 0\}, 
	\\
	&M^3=\{(x_1, x_2)\,:\, -x_1-x_1|x_1|+x_2^2\leq 0\},
	\\
	&Q^1=\{x^0\}, Q^2=M^1\cap M^3, Q^3=M^1\cap M^2.
	\end{align*}
	An easy computation shows that
	\begin{align*}
	&T(M^1; x^0)=M^1, T(M^2; x^0)=\{(u_1, u_2) : u_1\leq 0\}, 	 
	\\
	&T(M^3; x^0)= \{(u_1, u_2)\,:\, u_1\geq 0\}, \bigcap_{i=1}^3 T(M^i; x^0)=\{(u_1, u_2)\in\mathbb{R}^2:u_1=0, u_2\leq 0\}. 
	\end{align*}
	Clearly, $T(Q^1; x^0)=\{x^0\}$. Consequently, $\displaystyle\bigcap_{i=1}^3 T(Q^i; x^0)=\{x^0\}.$
	Thus 
	$$\bigcap_{i=1}^3 T(Q^i; x^0)\subsetneqq \bigcap_{i=1}^3 T(M^i; x^0).$$
	From this and the fact that 
	$$T^2(M^i; x^0, 0)=T(M^i; x^0), T^2(Q^i; x^0, 0)=T(Q^i; x^0),$$
	for all $i=1, 2, 3$, we have 
	$$\bigcap_{i=1}^3T^2(Q^i; x^0, 0)\subsetneqq \bigcap_{i=1}^3 T^2(M^i; x^0, 0).$$}
\end{example}

Now we define the first-order and the second-order linear approximation sets to the set $Q$. Fix $u\in\mathbb{R}^n$. By the compactness of $\partial_S^2f_i(x^0)(u)$ and $\partial_S^2g_j(x^0)(u)$, {\em we always denote hereafter} that $\xi^{*i}$ and $\zeta^{*j}$ are elements  in  $\partial_S^2f_i(x^0)(u)$ and  $\partial_S^2g_j(x^0)(u)$, respectively, such that    
\begin{align*}
\langle\xi^{*i}, u\rangle&:=\max \left\{\langle \xi^i, u\rangle\,:\, \xi^i\in \partial_S^2f_i(x^0)(u)\right\},\ \ i\in I,
\\
\langle\zeta^{*j}, u\rangle&:=\max \left\{\langle \zeta^j, u\rangle\,:\,\zeta^j\in \partial_S^2g_j(x^0)(u) \right\},\ \ j\in J.
\end{align*}
For each $(u, v)\in\mathbb{R}^n\times \mathbb{R}^n$, put
\begin{align*} 
F^2_i(x^0; u, v)&:=\left(\langle \nabla f_i(x^0), u\rangle, \langle \nabla f_i(x^0), v\rangle+\langle\xi^{*i}, u\rangle\right),\ \  i\in I,\\
G^2_j(x^0; u, v)&:=\left(\langle \nabla g_j(x^0), u\rangle, \langle \nabla g_j(x^0), v\rangle+\langle\zeta^{*j}, u\rangle\right),\ \ j\in J.
\end{align*}
\begin{definition}{\rm Let $x^0\in Q_0$ and $u\in\mathbb{R}^n$.
	\begin{enumerate}
		\item [(i)] The {\em linearizing cone} to $Q$ at $x^0\in Q_0$ is defined by
		$$L(Q; x^0):= \{u\in\mathbb{R}^n\,:\, \langle\nabla f_i (x^0), u\rangle \leq 0, \langle\nabla g_j (x^0), u\rangle \leq 0, i\in I, j\in J(x^0)\}.$$
		\item [(ii)] The {\em second-order linearizing set} of $Q$ at $x^0\in Q_0$ in the direction $u$ is defined by
		\begin{eqnarray*}
			L^2(Q; x^0, u)&&:=\{v\in\mathbb{R}^n\,\, :\, F^2_i(x^0; u, v)\leqq_{\rm lex} (0, 0),\ \  i\in I,
			\\
			&& \qquad \qquad \text{and} \ \ G^2_j(x^0; u, v)\leqq_{\rm lex} (0,0),\ \ j\in J(x^0)\}.
		\end{eqnarray*}
	\end{enumerate}	
}
\end{definition}

Then, we introduce two types of second-order  regularity conditions in the sense of Abadie as follows. 
\begin{definition}{\rm Let $x^0\in Q_0$ and $u\in\mathbb{R}^n$. We say that:
	\begin{enumerate}
		\item [(i)] The {\em  Abadie second-oder  regularity condition} holds at $x^0$ for the direction $u$ iff
		\[
		L^{2}(Q; x^0, u)\subset \bigcap_{i=1}^l T^2(Q^i; x^0, u). \tag{$ASORC$}\label{SARC}
		\]
		\item [(ii)] The {\em generalized Abadie second-oder  regularity condition} holds at $x^0$ for the direction $u$ iff 
		\[
		L^{2}(Q; x^0, u)\subset \bigcap_{i=1}^l T^2(M^i; x^0, u). \tag{$GASORC$}\label{GSARC}
		\]
	\end{enumerate}
}
\end{definition}
\begin{definition} Let  $x^0\in Q_0$. We say that the {\em Abadie first-order  regularity condition $(AFORC)$} (resp., {\em generalized Abadie first-oder  regularity condition $(GAFORC)$})    holds at $x^0$ iff \eqref{SARC} (resp., \eqref{GSARC}) holds at $x^0$ for the direction $0$.	
\end{definition}
\begin{remark}\label{relation_SORC}{\rm
	\begin{enumerate}
		\item [(i)] From the isotony of  second-order tangent sets, if the \eqref{SARC} holds at $x^0$ for the direction $u$, then so does the \eqref{GSARC}. The reverse does not hold in general; see Example \ref{not_Geoffrion} in Sect. \ref{Strong_KT_section}. 
		\item [(ii)] When $f$ and $g$ are $C^2$ functions, then the \eqref{GSARC} coincides with the generalized Abadie second-oder  regularity condition in the sense of Rizvi and Nasser  \cite{Rizvi06}.
		\item [(iii)] Let $\varphi\in C^{1,1}$. By \cite[Proposition 2.1.2]{Clarke83}, we have  
		$$\varphi^{\circ \circ}(x^0;u,u)=\max\,\{\langle\xi,u \rangle\,:\, \xi\in \partial_C \langle\nabla \varphi(\cdot), u\rangle (x^0)\}=\langle \xi^{*}, u\rangle,$$
		where
		\begin{align*}
		\varphi^{\circ \circ}(x^0;u,u)&:=\limsup\limits_{\mathop {x \to x^0}\limits_{t  \downarrow 0} } \frac{\langle\nabla \varphi(x+tu), u\rangle-\langle\nabla \varphi(x), u\rangle}{t},
		\\
		\langle \xi^{*}, u\rangle&:=\max \left\{\langle \xi, u\rangle\,:\, \xi \in \partial_S^2\varphi(x^0)(u)\right\}.
		\end{align*}
		Thus, the \eqref{SARC}  coincides with the  Abadie second-oder  regularity condition in the sense of Maeda \cite{Maeda04}.
	\end{enumerate}}
\end{remark}
We now introduce a condition, which ensures that the  \eqref{GSARC} holds at $x^0$ for the direction $u$.

\begin{proposition}\label{sufficient_GSARC} Let $x^0\in Q_0$ and  $u\in\mathbb{R}^n$. Suppose that, for each $i\in I$, the following system  (in the unknown $v$)
	\begin{eqnarray}
	\langle \nabla f_i(x^0), v\rangle+  \langle \xi^{*i}, u\rangle&<0,&   \label{equ:14}
	\\
	\langle \nabla g_j(x^0), v\rangle+ \langle \zeta^{*j}, u\rangle&<0,& \ \ \  j\in J(x^0; u), \label{equ:15}
	\end{eqnarray}
	has at least one solution, say $v^i\in\mathbb{R}^n$. Then, the \eqref{GSARC} holds at $x^0$ for the direction $u$. 
\end{proposition}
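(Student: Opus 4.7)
My plan is to fix an arbitrary $v\in L^2(Q;x^0,u)$ and $i\in I$, and to exhibit the sequences required by the definition of $T^2(M^i;x^0,u)$. The main idea is a perturbation-of-direction argument: using the vector $v^i$ supplied by the hypothesis, form the strict perturbations $v_\epsilon:=(1-\epsilon)v+\epsilon v^i$ and the arc $x_\epsilon(t):=x^0+tu+\tfrac{1}{2}t^2v_\epsilon$ for small $\epsilon,t>0$. I would first show that, for each $\epsilon$, there exists $t_\epsilon>0$ with $x_\epsilon(t)\in M^i$ for every $t\in(0,t_\epsilon)$, and then extract a diagonal subsequence $\epsilon_k\downarrow 0$, $t_k\in(0,\min\{t_{\epsilon_k},1/k\})$. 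This places $v$ in $T^2(M^i;x^0,u)$ with witness directions $v_{\epsilon_k}\to v$, and varying $i$ yields the (GSARC) inclusion.

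To certify $x_\epsilon(t)\in M^i$, I would expand $f_i$ and each $g_j$ by Taylor's formula. Writing $w(t):=tu+\tfrac{1}{2}t^2v_\epsilon=ty_t$ with $y_t:=u+\tfrac{1}{2}tv_\epsilon$, and using the positive homogeneity of $\partial_S^2$ in its direction argument (Proposition \ref{second-order_subdiff}(i)) to factor $t$ out, the expansion of $\varphi\in\{f_i,g_j\}$ takes the form
\begin{align*}
\varphi(x_\epsilon(t))-\varphi(x^0)=t\langle\nabla\varphi(x^0),u\rangle+\tfrac{1}{2}t^2\bigl[\langle\nabla\varphi(x^0),v_\epsilon\rangle+\langle\tilde\zeta,y_t\rangle\bigr],
\end{align*}
for some $\tilde\zeta\in\partial_S^2\varphi(\xi)(y_t)$ with $\xi$ on the segment $(x^0,x_\epsilon(t))$. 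A case split follows: when $\langle\nabla\varphi(x^0),u\rangle<0$ (which holds automatically for $g_j$ with $j\in J(x^0)\setminus J(x^0;u)$ because $v\in L^2(Q;x^0,u)$, and can happen for an $f_i$), the linear term dominates; indices $j\notin J(x^0)$ are handled by continuity. The genuine work is in the critical case $\langle\nabla\varphi(x^0),u\rangle=0$, i.e., $\varphi=f_i$ with vanishing first-order slope and $\varphi=g_j$ with $j\in J(x^0;u)$.

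The key technical step is the uniform estimate
\begin{align*}
\limsup_{t\downarrow 0,\,\xi\to x^0,\,y_t\to u}\langle\tilde\zeta,y_t\rangle\;\leq\;\langle\xi^{*i},u\rangle,\qquad \tilde\zeta\in\partial_S^2 f_i(\xi)(y_t).
\end{align*}
I would argue by contradiction: given sequences $\xi_k\to x^0$ and $\tilde\zeta_k\in\partial_S^2 f_i(\xi_k)(y_{t_k})$ violating the estimate by a fixed $\delta_0>0$, the inclusion $\partial_S^2 f_i(\xi_k)(y_{t_k})\subset\partial_C\langle y_{t_k},\nabla f_i(\cdot)\rangle(\xi_k)$ together with boundedness of $\{y_{t_k}\}$ forces $\|\tilde\zeta_k\|\leq L\|y_{t_k}\|$, where $L$ is a local Lipschitz constant of $\nabla f_i$. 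Extracting a limit $\tilde\zeta_k\to\bar\zeta$ and invoking the upper semicontinuity of the Clarke subdifferential under the uniformly Lipschitz perturbation $\langle y_{t_k},\nabla f_i(\cdot)\rangle\to\langle u,\nabla f_i(\cdot)\rangle$ places $\bar\zeta\in\partial_C\langle u,\nabla f_i\rangle(x^0)$, whence Remark \ref{relation_SORC}(iii) gives $\langle\bar\zeta,u\rangle\leq\langle\xi^{*i},u\rangle$, a contradiction. The identical argument handles each $g_j$ with $j\in J(x^0;u)$ in place of $f_i$.

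With this bound in hand, in the critical case the $\tfrac{1}{2}t^2$ coefficient in the expansion of $f_i$ is at most $\langle\nabla f_i(x^0),v_\epsilon\rangle+\langle\xi^{*i},u\rangle+o(1)$, and the convex combination
\begin{align*}
\langle\nabla f_i(x^0),v_\epsilon\rangle+\langle\xi^{*i},u\rangle=(1-\epsilon)\bigl[\langle\nabla f_i(x^0),v\rangle+\langle\xi^{*i},u\rangle\bigr]+\epsilon\bigl[\langle\nabla f_i(x^0),v^i\rangle+\langle\xi^{*i},u\rangle\bigr]
\end{align*}
is strictly negative, because the first bracket is $\leq 0$ by $v\in L^2(Q;x^0,u)$ and the second is $<0$ by hypothesis \eqref{equ:14}; this absorbs the $o(1)$ remainder for sufficiently small $t$ and yields $f_i(x_\epsilon(t))\leq f_i(x^0)$. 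The indices $j\in J(x^0;u)$ are handled identically via \eqref{equ:15}. The principal obstacle is precisely the uniform upper semicontinuity of $\partial_S^2$ along \emph{both} the base-point and the direction-argument perturbations, which is not directly supplied by Proposition \ref{second-order_subdiff}(iii); routing the argument through the Clarke subdifferential and appealing to Remark \ref{relation_SORC}(iii) is what makes the estimate clean.
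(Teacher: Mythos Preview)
Your proof is correct, but it takes a genuinely different route from the paper's, and the difference is worth noting.

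The paper expands $f_i(x^0+t_pu+\tfrac12 t_p^2 w^h)-f_i(x^0)$ in \emph{two} pieces: first $f_i(x^0+t_pu)-f_i(x^0)$ via the second-order Taylor formula along the \emph{fixed} direction $u$, producing a term $\tfrac12 t_p^2\langle\xi^p,u\rangle$ with $\xi^p\in\partial_S^2 f_i(\eta^p)(u)$; then the residual $f_i(x^0+t_pu+\tfrac12 t_p^2 w^h)-f_i(x^0+t_pu)$ via the ordinary mean-value theorem, giving $\tfrac12 t_p^2\langle\nabla f_i(\gamma^p),w^h\rangle$. Because the direction argument of $\partial_S^2$ never moves off $u$, Proposition~\ref{second-order_subdiff}(iii) applies directly: a subsequence of $\{\xi^p\}$ converges to some $\xi^i\in\partial_S^2 f_i(x^0)(u)$, and then $\langle\xi^i,u\rangle\le\langle\xi^{*i},u\rangle$ is immediate from the definition of $\xi^{*i}$. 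No Clarke detour is needed.

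Your one-shot expansion along $ty_t$ forces the direction argument to vary, which is precisely the obstacle you identify; you then repair it via the inclusion $\partial_S^2\subset\partial_C$, a joint base-point/function upper-semicontinuity argument for $\partial_C$, and Remark~\ref{relation_SORC}(iii) to convert the Clarke bound back to $\langle\xi^{*i},u\rangle$. This works (the key estimate $h_k^\circ(\xi_k;d)\le h^\circ(\xi_k;d)+L\|y_{t_k}-u\|\,\|d\|$ follows by writing $h_k-h=\langle y_{t_k}-u,\nabla f_i(\cdot)\rangle$ and using the Lipschitz constant of $\nabla f_i$), but it imports machinery the paper never invokes. The paper's two-step decomposition is the cleaner device here: it buys you exactly the fixed-direction stability that Proposition~\ref{second-order_subdiff}(iii) already supplies, at the cost of one extra application of the classical mean-value theorem.
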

\begin{proof} Let $v$ be an arbitrary element in $L^2(Q; x^0, u)$. Fix $i\in I$ and let $v^i$ be a solution of the system \eqref{equ:14}-\eqref{equ:15}.     Let $\{s_h\}$ and $\{t_p\}$ be any positive sequences converging to 0. For each $h\in\mathbb{N}$, put $w^h:=s_hv^i+(1-s_h)v$.  Clearly, $\displaystyle\lim_{h\to\infty}w^h=v$. Since
	$v\in L^2(Q; x^0, u)$, we have
	\begin{equation}\label{equa:6}
	\begin{cases}
	F^2_k(x^0; u, v)&=\left(\langle \nabla f_k(x^0), u\rangle, \langle \nabla f_k(x^0), v\rangle+\langle\xi^{*k}, u\rangle\right)\leqq_{\rm lex} (0,0), \,\,k\in I,
	\\
	G^2_j(x^0; u, v)&=\left(\langle \nabla g_j(x^0), u\rangle, \langle \nabla g_j(x^0), v\rangle+\langle\zeta^{*j}, u\rangle\right)\leqq_{\rm lex} (0,0),\, \, j\in J(x^0).
	\end{cases}
	\end{equation}
	This implies that
	$$
	\begin{cases}
	\langle \nabla f_k(x^0), u\rangle\leq 0,\ \ \ \forall k\in I,
	\\
	\langle \nabla g_j(x^0), u\rangle\leq 0, \ \ \ \forall j\in J(x^0).
	\end{cases}
	$$
	For $h=1$, we have $w^1=s_1v^i+(1-s_1)v$. We consider two cases of $i$ as follows.
	\\
	{\bf Case 1.} $\langle \nabla f_i(x^0), u\rangle= 0$. By \eqref{equa:6}, we have $\langle \nabla f_i(x^0), v\rangle+\langle\xi^{*i}, u\rangle \leq 0.$
	Since $v^i$ is a solution of the system \eqref{equ:14}-\eqref{equ:15}, we have
	\begin{align}
	\langle\nabla f_i(x^0), w^1\rangle+\langle \xi^{*i}, u\rangle=&s_1[\langle\nabla f_i(x^0), v^i\rangle+\langle \xi^{*i}, u\rangle]\notag
	\\
	&+(1-s_1)[\langle\nabla f_i(x^0), v\rangle+\langle \xi^{*i}, u\rangle]<0. \label{equa:2}
	\end{align}
	For each $p\in \mathbb{N}$, put $x^p:=x^0+t_pu+\frac12t^2_pw^{1}$. Then we have
	\begin{equation*}
	f_i(x^p)-f_i(x^0)=[f_i(x^p) -f_i(x^0+t_pu)]+[f_i(x^0+t_pu)-f_i(x^0)-t_p\langle\nabla f_i(x^0),u\rangle].
	\end{equation*}
	From the mean value theorem for differentiable functions, we have
	\begin{equation}
	f_i(x^p)-f_i(x^0+t_pu)=\langle \nabla f_i(\gamma^s),  \frac12t_p^2w^1\rangle=\frac12t_p^2\langle \nabla f_i(\gamma^p),  w^1\rangle \label{equ:11}
	\end{equation}
	for some  $\gamma^p\in (x^0+t_pu, x^p)$. By Taylor's formula, there exist $\eta^p\in (x^0, x^0+t_pu)$ and $\zeta^s\in \partial_S^2f_i(\eta^p)(t_pu)$ such that
	$$f_i(x^0+t_pu)-f_i(x^0)-t_p\langle\nabla f_i(x^0), u\rangle =\frac12 \langle \zeta^p, t_pu\rangle= \frac12t_p \langle \zeta^p, u\rangle.$$
	Furthermore, since $\partial_S^2f_i(\eta^p)(t_pu)=t_p\partial_S^2f_i(\eta^p)(u)$,
	there is $\xi^p\in \partial_S^2f_i(\eta^p)(u)$ such that $\zeta^p=t_p\xi^p$. Thus 
	$$f_i(x^0+t_pu)-f_i(x^0)-t_p\langle\nabla f_i(x^0), u\rangle  =\frac12t^2_p\langle \xi^p, u\rangle.$$
	From this and \eqref{equ:11}, one has  
	\begin{equation}\label{equ:12}
	\dfrac{f_i(x^p)-f_i(x^0)}{\frac12t^2_p}=\langle\nabla f_i(\gamma^p),  w^1\rangle+\langle\xi^p, u\rangle.
	\end{equation}
	Since $\partial_S^2f_i(\cdot)(u)$ is locally bounded at $x^0$ and  $\displaystyle\lim_{p\to\infty}\eta^p=x^0$, it follows that the sequence $\{\xi^p\}$ is bounded. By the boundedness of $\{\xi^p\}$ and Proposition \ref{second-order_subdiff}(iii), without loss of any generality, we may assume that $\{\xi^p\}$ converges to $\xi^{i}\in \partial_S^2f_i(x^0)$. It is easily seen that
	$$\lim_{p\to\infty}\langle\nabla f_i(\gamma^p),  w^1\rangle=\langle\nabla f_i(x^0),  w^1\rangle.$$
	Letting $p\to\infty$ in \eqref{equ:12} we have
	\begin{equation}\label{equ:18}
	\lim_{p\to\infty}\dfrac{f_i(x^p)-f_i(x^0)}{\frac12t^2_p}=\langle\nabla f_i(x^0), w^{1}\rangle+\langle\xi^{i},u\rangle.
	\end{equation}
	Since \eqref{equa:2}, one has
	\begin{equation}\langle\nabla f_i(x^0), w^{1}\rangle+\langle\xi^{i},u\rangle\leq \langle\nabla f_i(x^0), w^1\rangle+\langle \xi^{*i}, u\rangle<0.\label{equa:4}
	\end{equation}
	By \eqref{equ:18} and \eqref{equa:4}, there exists $N_1\in\mathbb{N}$ such that 
	$f_i(x^p)-f_i(x^0)<0$ for all $p\geq N_1$,
	or, equivalently,
	\begin{equation}\label{equa:7}
	f_i(x^p)<f_i(x^0),\ \ \ \forall p\geq N_1.
	\end{equation}
	{\bf Case 2.} $\langle \nabla f_i(x^0), u\rangle< 0$. Since
	$$\lim_{p\to\infty}\dfrac{f_i(x^p)-f_i(x^0)}{t_p}=\langle\nabla f_i(x^0), u\rangle,$$
	there exists $N_2\in\mathbb{N}$ such that $f_i(x^p)<f_i(x^0)$ for all $p\geq N_2.$
	
	We now claim that  $g_j(x^p)<0$ for all $j\in J$ and  $p$ large enough. Indeed, we consider the following cases:
	\begin{enumerate}
		\item [(a)] $j\in J\setminus J(x^0)$. Then $g_j(x^0)<0$. Hence, by ${\displaystyle\lim_{p\to\infty}}x^p=x^0$ and the continuity of $g_j$,
		there exists $N_3\in\mathbb{N}$ such that $g_j(x^p)<0$ for all $p\geq N_3$.
		\item [(b)] $j\in J(x^0)\setminus J (x^0; u)$. Then $g_j(x^0)=0$ and $\langle \nabla g_j(x^0), u\rangle<0$.
		From this and 
		$$\lim_{p\to\infty}g_j(x^p)=\lim_{p\to\infty}\dfrac{g_j(x^p)-g_j(x^0)}{t_p}=\langle \nabla g_j(x^0), u\rangle$$
		it follows that there exists $N_4\in\mathbb{N}$ such that $g_j(x^p)<0$ for all $p\geq N_4.$
		\item [(c)]  $j\in J(x^0; u)$. An analysis similar to the one made in the proof of \eqref{equa:7} shows that there exist $N_5\in \mathbb{N}$ such that
		$g_j(x^p)<0$ for all $p\geq N_5$.
	\end{enumerate}
	
	Put $p_1=\max\{N_1, N_2, N_3, N_4, N_5\}$. Then we have
	\begin{align*}
	f_i(x^0+t_{p_1}u+\frac12t^2_{p_1}w^{1})&<f_i(x^0),\\
	g_j(x^0+t_{p_1}u+\frac12t^2_{p_1}w^{1})&<0,\ \ \ \forall j\in J.
	\end{align*}
	This means that  $x^0+t_{p_1}u+\frac12t^2_{p_1}w^{1}\in M^i.$
	Thus, by induction, we could construct a subsequence $\{t_{p_h}\}$ of $\{t_p\}$ satisfying $x^0+t_{p_h}u+\frac12t^2_{p_h}w^{h}\in M^i$ for all $h\in\mathbb{N}$.  From this and $\displaystyle\lim_{h\to\infty}w^h=v$ it follows that $v\in T^2(M^i; x^0, u)$. 
	Since $i$ is arbitrary, we have $v\in T^2(M^i; x^0, u)$ for all $i\in I$. Thus, the \eqref{GSARC} holds at $x^0$ for the direction $u$. 
\end{proof}
By a similar argument, we have the following result.
\begin{proposition} Let $x^0\in Q_0$ and  $u\in\mathbb{R}^n$. Suppose that, for each $i\in I$, the following system (in the unknown $w$)
	\begin{eqnarray*}
		\langle \nabla f_k(x^0), w\rangle+  \langle \xi^{*k}, u\rangle&<0,\ \ \ & k\in I\setminus\{i\},
		\\
		\langle \nabla g_j(x^0), w\rangle+ \langle \zeta^{*j}, u\rangle&<0, \ \ \ & j\in J(x^0; u),
	\end{eqnarray*}
	has at least one solution, say $w^i\in\mathbb{R}^n$. Then, the \eqref{SARC} holds at $x^0$ for the direction $u$. 
\end{proposition}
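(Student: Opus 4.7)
The plan is to mimic the proof of Proposition \ref{sufficient_GSARC} almost verbatim, replacing the role of the single index $i$ by the collection $k \in I \setminus \{i\}$, and replacing the target set $T^2(M^i; x^0, u)$ by $T^2(Q^i; x^0, u)$. The key observation justifying this switch is that membership in $Q^i$ requires $f_k(x) \leq f_k(x^0)$ for all $k \in I \setminus \{i\}$ together with $g(x) \leqq 0$, and these are precisely the inequalities that the modified linear system addresses.

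First I would fix an arbitrary $v \in L^2(Q; x^0, u)$ and an index $i \in I$. Let $w^i \in \mathbb{R}^n$ be a solution of the given system, and take arbitrary positive sequences $s_h \downarrow 0$ and $t_p \downarrow 0$. Define $w^h := s_h w^i + (1 - s_h) v$ so that $w^h \to v$, and set $x^p := x^0 + t_p u + \tfrac{1}{2} t_p^2 w^1$. Since $v \in L^2(Q; x^0, u)$, for every $k \in I$ and $j \in J(x^0)$ the lexicographic inequalities
\[
\bigl(\langle \nabla f_k(x^0), u\rangle,\ \langle \nabla f_k(x^0), v\rangle + \langle \xi^{*k}, u\rangle\bigr) \leqq_{\rm lex} (0,0),
\]
\[
\bigl(\langle \nabla g_j(x^0), u\rangle,\ \langle \nabla g_j(x^0), v\rangle + \langle \zeta^{*j}, u\rangle\bigr) \leqq_{\rm lex} (0,0)
\]
hold, so in particular $\langle \nabla f_k(x^0), u\rangle \leq 0$ and $\langle \nabla g_j(x^0), u\rangle \leq 0$.

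Next I would verify that $x^p \in Q^i$ for all sufficiently large $p$. For each $k \in I \setminus \{i\}$, I split into the cases $\langle \nabla f_k(x^0), u\rangle = 0$ and $\langle \nabla f_k(x^0), u\rangle < 0$. In the first case, the lexicographic condition gives $\langle \nabla f_k(x^0), v\rangle + \langle \xi^{*k}, u\rangle \leq 0$, and combining this with the strict inequality $\langle \nabla f_k(x^0), w^i\rangle + \langle \xi^{*k}, u\rangle < 0$ guaranteed by the hypothesis, the convex combination yields
\[
\langle \nabla f_k(x^0), w^1\rangle + \langle \xi^{*k}, u\rangle < 0.
\]
Then applying the Taylor formula in $\partial_S^2 f_k$ together with the mean value theorem exactly as in equations \eqref{equ:11}--\eqref{equ:18} of the previous proof, and using Proposition \ref{second-order_subdiff}(iii) to pass to the limit along a bounded subsequence of second-order subgradients, I obtain $f_k(x^p) < f_k(x^0)$ for $p$ large enough. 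In the second case, the first-order expansion $\lim_{p\to\infty} \frac{f_k(x^p)-f_k(x^0)}{t_p} = \langle \nabla f_k(x^0), u\rangle < 0$ immediately yields the same conclusion.

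For the constraints, I would use the same trichotomy as in the proof of Proposition \ref{sufficient_GSARC}: if $j \notin J(x^0)$, continuity of $g_j$ suffices; if $j \in J(x^0) \setminus J(x^0; u)$, first-order expansion suffices; and if $j \in J(x^0; u)$, the second-order Taylor argument together with the strict inequality $\langle \nabla g_j(x^0), w^i\rangle + \langle \zeta^{*j}, u\rangle < 0$ delivers $g_j(x^p) < 0$ for $p$ large. Taking the maximum of the finitely many thresholds produces an index $p_1$ with $x^0 + t_{p_1} u + \tfrac{1}{2} t_{p_1}^2 w^1 \in Q^i$. Inductively applying this construction with $w^h$ in place of $w^1$ produces a subsequence $\{t_{p_h}\}$ with $x^0 + t_{p_h} u + \tfrac{1}{2} t_{p_h}^2 w^h \in Q^i$, and since $w^h \to v$ this shows $v \in T^2(Q^i; x^0, u)$. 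As $i$ was arbitrary, the \eqref{SARC} holds at $x^0$ for the direction $u$.

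There is essentially no new obstacle: the sole conceptual point is bookkeeping the index sets correctly so that the strict inequalities in the hypothesis are paired with the inequalities defining $Q^i$ rather than $M^i$. The technical machinery — the Taylor formula, the local boundedness and limit stability of $\partial_S^2 \varphi$, and the trichotomy for the constraint indices — is identical to the previous proposition, so the proof can be stated compactly as "by an argument analogous to that of Proposition \ref{sufficient_GSARC}" with the substitutions above spelled out.
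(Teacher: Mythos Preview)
Your proposal is correct and is precisely the approach the paper takes: the paper gives no separate proof, stating only ``By a similar argument, we have the following result,'' and the modifications you spell out---replacing the single objective index $i$ by the family $k\in I\setminus\{i\}$ and the target set $M^i$ by $Q^i$---are exactly the bookkeeping changes needed to carry the argument of Proposition~\ref{sufficient_GSARC} over.
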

\section{Strong Second-Order  Karush--Kuhn--Tucker Necessary Optimality Conditions}\label{Strong_KT_section}
In this section, we provide some strong second-order Karush--Kuhn-Tucker necessary optimality conditions for Geoffrion properly efficient solutions of   \eqref{problem}.  Properly efficient solution plays a vital role from both theoretical and practical points of view. This concept has been introduced at first to eliminate the efficient solutions with unbounded trade-offs. In multiobjective optimization, properly efficient solutions are efficient solutions in which, given any objective, the trade-off between that objective and some other objective is bounded. This notion was introduced initially by Kuhn and Tucker \cite{Kuhn52} and was followed thereafter by Geoffrion
\cite{Geoffrion68}. Geoffrion's definition enjoys economical interpretations, while Kuhn and Tucker's definition is useful for numerical and algorithmic purposes. We now recall the definition of Geoffrion properly efficient solutions from \cite{Geoffrion68}.

\begin{definition} {\rm
	Let $x^0\in Q_0$. We say that:
	\begin{enumerate}
		\item [(i)] $x^0$ is an {\em efficient solution} to \eqref{problem} iff there is no $x\in Q_0$ satisfies  $f(x)\leq f(x^0)$.
		\item [(ii)] $x^0$ is a {\em Geoffrion properly efficient solution} to \eqref{problem} iff it is efficient and there exists $M>0$ such that, for each $i$, 
		$$\frac{f_i(x)-f_i(x^0)}{f_j(x^0)-f_j(x)}\leq M,$$
		for some $j$ such that $f_j(x^0)<f_j(x)$ whenever $x\in Q_0$ and $f_i(x^0)>f_i(x)$.	
	\end{enumerate} }
\end{definition}

The following result gives  a first-order necessary optimality condition for  \eqref{problem}  under the $(GAFORC)$.
\begin{theorem}\label{first_order-nec} {\rm (see \cite[Theorem 4.3]{Rizvi12})} If $x^0\in Q_0$ is a Geoffrion properly efficient solution to problem \eqref{problem} and the $(GAFORC)$ holds at $x^0$, then the following system has no solution $u\in\mathbb{R}^n$:
	\begin{eqnarray*}
		\langle \nabla f_i(x^0), u\rangle&\leq 0, \ \ \ &i\in I,  
		\\
		\langle \nabla f_i(x^0), u\rangle&< 0, \ \ \ &\mbox{at least one}\ \ i\in I,
		\\
		\langle \nabla g_j(x^0), u\rangle&\leq 0, \ \ \ &j\in J (x^0).  
	\end{eqnarray*}
\end{theorem}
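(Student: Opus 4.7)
The plan is to proceed by contradiction: assume some $u\in\mathbb{R}^n$ solves the three-line system and derive a violation of Geoffrion proper efficiency.

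First I would verify the identity $L(Q;x^0)=L^2(Q;x^0,0)$. For the direction $0$, Proposition \ref{second-order_subdiff}(i) (with $\lambda=0$) forces $\partial_S^2 f_i(x^0)(0)=\{0\}=\partial_S^2 g_j(x^0)(0)$, so the pairs $F^2_i(x^0;0,v)$ and $G^2_j(x^0;0,v)$ collapse to $(0,\langle\nabla f_i(x^0),v\rangle)$ and $(0,\langle\nabla g_j(x^0),v\rangle)$, and the lexicographic inequalities $\leqq_{\rm lex}(0,0)$ become exactly the defining conditions of $L(Q;x^0)$. Thus $u\in L^2(Q;x^0,0)$; combining with the identity $T^2(M^i;x^0,0)=T(M^i;x^0)$ and the hypothesis $(GAFORC)$ yields $u\in T(M^i;x^0)$ for every $i\in I$.

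Next I would pick $i_0\in I$ with $\langle\nabla f_{i_0}(x^0),u\rangle<0$ (guaranteed by the second line of the system). From $u\in T(M^{i_0};x^0)$ extract $t_k\downarrow 0$ and $u^k\to u$ with $x^k:=x^0+t_k u^k\in M^{i_0}\subset Q_0$. Fr\'echet differentiability of $f$ at $x^0$ gives, for every $i\in I$,
\[
\lim_{k\to\infty}\frac{f_i(x^k)-f_i(x^0)}{t_k}=\langle\nabla f_i(x^0),u\rangle,
\]
a value that is $\le 0$ by the first line of the system and strictly negative for $i=i_0$. In particular $f_{i_0}(x^k)<f_{i_0}(x^0)$ for all large $k$, so Geoffrion proper efficiency supplies $M>0$ and an index $j_k\in I$ with $f_{j_k}(x^k)>f_{j_k}(x^0)$ and
\[
0<\frac{f_{i_0}(x^0)-f_{i_0}(x^k)}{f_{j_k}(x^k)-f_{j_k}(x^0)}\le M.
\]
Since $I$ is finite, a subsequence makes $j_k\equiv j$ constant. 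Because $f_j(x^k)>f_j(x^0)$ while $\langle\nabla f_j(x^0),u\rangle\le 0$, the only option is $\langle\nabla f_j(x^0),u\rangle=0$, forcing $f_j(x^k)-f_j(x^0)=o(t_k)$. On the other hand $f_{i_0}(x^0)-f_{i_0}(x^k)=t_k|\langle\nabla f_{i_0}(x^0),u\rangle|+o(t_k)$. Dividing the inequality $f_{i_0}(x^0)-f_{i_0}(x^k)\le M\bigl(f_j(x^k)-f_j(x^0)\bigr)$ by $t_k$ and letting $k\to\infty$ produces $|\langle\nabla f_{i_0}(x^0),u\rangle|\le 0$, a contradiction.

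The delicate step is the treatment of indices $j\ne i_0$ with $\langle\nabla f_j(x^0),u\rangle=0$: a crude sign argument on $f_j(x^k)-f_j(x^0)$ is inconclusive, since these values may still slightly exceed $f_j(x^0)$. Geoffrion's bounded-trade-off condition is precisely the quantitative tool that rules this out, by requiring a gain of order $t_k$ in objective $i_0$ to be paid for by a loss of at least comparable order in some other objective---something impossible whenever every competing gradient has zero inner product with $u$.
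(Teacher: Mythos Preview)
Your argument is correct. The paper does not give its own proof of this theorem; it merely cites it as \cite[Theorem~4.3]{Rizvi12}. Your self-contained proof via the tangent-cone sequence in $M^{i_0}$ and the bounded trade-off inequality is sound: the key passage---forcing $\langle\nabla f_j(x^0),u\rangle=0$ for the compensating index $j$ and then dividing the Geoffrion inequality by $t_k$---goes through exactly as you describe, and the identification $L(Q;x^0)=L^2(Q;x^0,0)$ via Proposition~\ref{second-order_subdiff}(i) is the right way to invoke $(GAFORC)$ in this paper's framework.
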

\begin{definition}{\rm We say that the {\em strong second-order Krush--Kuhn-Tucker necessary optimality conditions} $(SSKKT)$ holds at $x^0$ for the direction $u$  iff  there exist  $\lambda\in \mathbb{R}^l$ and $\mu\in\mathbb{R}^m$  satisfying
	\begin{align}
	&\sum_{i=1}^l\lambda_i\nabla f_i(x^0)+\sum_{j=1}^m\mu_j\nabla g_j(x^0)=0, \label{equa:22new1} 
	\\
	&\sum_{i=1}^l\lambda_i\langle \xi^{*i}, u\rangle+\sum_{j=1}^m\mu_j\langle \zeta^{*j}, u\rangle\geq 0, \label{equa:22new2} 
	\\
	&\mu=(\mu_1, \mu_2, \ldots, \mu_m)\geqq 0, \mu_j=0,\ \  j\notin J(x^0; u), \label{equa:22new4}
	\\
	&\lambda=(\lambda_1, \lambda_2, \ldots, \lambda_l)> 0.  \label{equa:22new3}
	\end{align}   }
\end{definition}

The following theorem is crucial for establishing the $(SSKKT)$.
\begin{theorem}\label{strong_KKT_I}  Let $x^0$ be a Geoffrion  properly efficient solution  to problem \eqref{problem}. Suppose that the \eqref{GSARC} holds at $x^0$ for any critical direction at $x^0$. Then, the following system 
	\begin{align}
	F^2_i(x^0; u, v)&\leq_{\rm lex} (0, 0),\ \ \ i\in I, \label{equ:20}
	\\
	F^2_i(x^0; u, v)&<_{\rm lex} (0, 0),\ \ \ \mbox{ at least one} \ \ i\in I, \label{equ:21}
	\\
	G^2_j(x^0; u, v)&\leqq_{\rm lex} (0, 0),\ \ \  j\in J(x^0) \label{equ:22}
	\end{align}
	has no solution $(u, v)\in\mathbb{R}^n\times\mathbb{R}^n$.
\end{theorem}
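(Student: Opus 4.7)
The plan is a proof by contradiction. Assume $(u,v)\in\mathbb{R}^n\times\mathbb{R}^n$ solves \eqref{equ:20}--\eqref{equ:22}. First observe that specializing \eqref{GSARC} to the direction $u=0$ (where $L^2(Q;x^0,0)=L(Q;x^0)$ and $T^2(M^i;x^0,0)=T(M^i;x^0)$) shows that the hypothesis of the theorem implies the first-order condition $(GAFORC)$ at $x^0$. Reading off the first coordinates in \eqref{equ:20} and \eqref{equ:22} yields $\langle\nabla f_i(x^0),u\rangle\le 0$ for all $i\in I$ and $\langle\nabla g_j(x^0),u\rangle\le 0$ for $j\in J(x^0)$; if any of the inequalities for the $f_i$ were strict, $u$ would solve the first-order system of Theorem \ref{first_order-nec}, contradicting that result. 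Hence $\langle\nabla f_i(x^0),u\rangle=0$ for every $i\in I$, so $u\in K(x^0)$ and \eqref{GSARC} is directly applicable at $(x^0,u)$.

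Since $v\in L^2(Q;x^0,u)$ by hypothesis, \eqref{GSARC} yields $v\in\bigcap_{i=1}^l T^2(M^i;x^0,u)$. Pick $i_0\in I$ witnessing the strict inequality in \eqref{equ:21}; because the first component of $F^2_{i_0}(x^0;u,v)$ vanishes, one gets
$$\langle\nabla f_{i_0}(x^0),v\rangle+\langle\xi^{*i_0},u\rangle<0.$$
From $v\in T^2(M^{i_0};x^0,u)$ choose $t_k\downarrow 0$ and $v^k\to v$ with $x^k:=x^0+t_ku+\tfrac12 t_k^2 v^k\in M^{i_0}\subset Q_0$. Applying the mean value theorem to $f_{i_0}(x^k)-f_{i_0}(x^0+t_ku)$, Taylor's formula to $f_{i_0}(x^0+t_ku)-f_{i_0}(x^0)$ (exactly as in the proof of Proposition \ref{sufficient_GSARC}), and using Proposition \ref{second-order_subdiff}(i),(iii) to extract a subsequence, I obtain $\xi^{i_0}\in\partial_S^2 f_{i_0}(x^0)(u)$ with
$$\frac{f_{i_0}(x^k)-f_{i_0}(x^0)}{\tfrac12 t_k^2}\longrightarrow \langle\nabla f_{i_0}(x^0),v\rangle+\langle\xi^{i_0},u\rangle\;\le\;\langle\nabla f_{i_0}(x^0),v\rangle+\langle\xi^{*i_0},u\rangle<0.$$
In particular $f_{i_0}(x^k)<f_{i_0}(x^0)$ for all sufficiently large $k$.

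Now invoke Geoffrion proper efficiency at $x^0$: there exist $M>0$ (independent of $k$) and indices $j_k\in I$ with $f_{j_k}(x^k)>f_{j_k}(x^0)$ and $\tfrac{f_{i_0}(x^k)-f_{i_0}(x^0)}{f_{j_k}(x^0)-f_{j_k}(x^k)}\le M$. Finiteness of $I$ allows me to pass to a subsequence on which $j_k\equiv j^*$. Rearranging and dividing by $\tfrac12 t_k^2>0$ gives
$$\frac{f_{j^*}(x^k)-f_{j^*}(x^0)}{\tfrac12 t_k^2}\;\ge\;-\frac{1}{M}\cdot\frac{f_{i_0}(x^k)-f_{i_0}(x^0)}{\tfrac12 t_k^2}.$$
Repeating the Taylor/extraction argument for $f_{j^*}$ along the \emph{same} sequence $\{x^k\}$ (using $\langle\nabla f_{j^*}(x^0),u\rangle=0$) produces $\xi^{j^*}\in\partial_S^2 f_{j^*}(x^0)(u)$ such that passing to the limit in the above yields
$$\langle\nabla f_{j^*}(x^0),v\rangle+\langle\xi^{j^*},u\rangle\;\ge\;-\frac{1}{M}\bigl[\langle\nabla f_{i_0}(x^0),v\rangle+\langle\xi^{i_0},u\rangle\bigr]\;>\;0.$$
On the other hand, \eqref{equ:20} at $i=j^*$, combined with $\langle\nabla f_{j^*}(x^0),u\rangle=0$, forces $\langle\nabla f_{j^*}(x^0),v\rangle+\langle\xi^{*j^*},u\rangle\le 0$, and the maximality of $\xi^{*j^*}$ gives $\langle\nabla f_{j^*}(x^0),v\rangle+\langle\xi^{j^*},u\rangle\le 0$. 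This contradicts the previous display.

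The main obstacle I expect is careful bookkeeping of the second-order Taylor expansions and the subsequential extractions: one must read off the limits of $f_{i_0}$ and $f_{j^*}$ along the \emph{same} sequence $\{x^k\}$ coming from $v\in T^2(M^{i_0};x^0,u)$, and ensure that the limiting subgradients $\xi^{i_0},\xi^{j^*}$ indeed land in $\partial_S^2 f_{i_0}(x^0)(u)$ and $\partial_S^2 f_{j^*}(x^0)(u)$ respectively, so that they can be dominated by the maximal elements $\xi^{*i_0},\xi^{*j^*}$ built into the definition of $L^2(Q;x^0,u)$.
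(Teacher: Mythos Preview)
Your proof is correct and follows essentially the same route as the paper: argue by contradiction, reduce to $u\in K(x^0)$ via Theorem~\ref{first_order-nec} (using that \eqref{GSARC} at $u=0$ gives $(GAFORC)$), apply \eqref{GSARC} at $u$ to obtain a sequence $\{x^k\}\subset M^{i_0}\subset Q_0$, and run the Taylor/extraction argument from the proof of Proposition~\ref{sufficient_GSARC}. The only difference is in the closing step: the paper shows that for \emph{every} index $i$ with $f_i(x^k)>f_i(x^0)$ the second-order limit $\langle\nabla f_i(x^0),v\rangle+\langle\xi^{i},u\rangle$ equals $0$, so the trade-off ratio $\dfrac{f_{i_0}(x^k)-f_{i_0}(x^0)}{f_i(x^0)-f_i(x^k)}$ tends to $+\infty$, contradicting Geoffrion proper efficiency; you instead invoke Geoffrion efficiency first to single out one index $j^*$ with bounded ratio and then show its second-order limit is strictly positive, contradicting $\langle\nabla f_{j^*}(x^0),v\rangle+\langle\xi^{*j^*},u\rangle\le 0$---a slightly more direct packaging of the same idea.
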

\begin{proof} Arguing by contradiction, assume that the system \eqref{equ:20}--\eqref{equ:22} admits one solution $(u, v)\in\mathbb{R}^n\times\mathbb{R}^n$. Without loss of generality we may assume that
	\begin{equation}\label{equa:21_new1}
	F^2_1(x^0; u, v)<_{\rm lex} (0, 0).
	\end{equation}
	From \eqref{equ:20} and \eqref{equ:22} it follows that $v\in L^2 (Q; x^0, u)$ and
	\begin{eqnarray*}
		\langle \nabla f_i(x^0), u\rangle&\leq 0, \ \ \ &i\in I,
		\\
		\langle \nabla g_j(x^0), u\rangle&\leq 0, \ \ \ &j\in J (x^0).
	\end{eqnarray*}
	Since the \eqref{GSARC} holds at $x^0$ for any critical direction, the $(GAFORC)$ holds at $x^0$.    By Theorem \ref{first_order-nec}, we have 
	$\langle \nabla f_i(x^0), u\rangle=0$ for all $i\in I$. Thus, $u$ is a critical direction at $x^0$. Since the \eqref{GSARC} holds at $x^0$ for the critical direction $u$, we have $v\in T^2(M^i; x^0, u)$ for all $i\in I$. Consequently, $v\in T^2(M^1; x^0, u)$. This implies that there exist a sequence  $\{v^k\}$ converging to $v$ and a positive sequence  $\{t_k\}$ converging to $0$ such that
	$$x^k:=x^0+t_ku+\frac12t_k^2v^k\in M^1,\ \ \ \forall k\in\mathbb{N}.$$ 
	Since $M^1\subset Q_0$, $\{x^k\}\subset Q_0$. By \eqref{equ:20} and \eqref{equa:21_new1}, we have
	\begin{align}
	\langle \nabla f_1(x^0), v\rangle +\langle \xi^{*1}, u\rangle&<0, \label{equa:21_new2} 
	\\
	\langle \nabla f_i(x^0), v\rangle +\langle \xi^{*i}, u\rangle&\leq 0,\ \ \ \forall i\in \{2, \ldots, l\}. \label{equa:21_new22}
	\end{align}
	An analysis similar to the one made in the proof of Proposition \ref{sufficient_GSARC} shows that, for each $i\in I$,  there exists $\xi^i\in \partial_S^2f_i(x^0)(u)$ such that 
	\begin{equation}\label{equa:21_new33}
	\lim_{k\to\infty}\dfrac{f_i(x^k)-f_i(x^0)}{\frac12t^2_k}=\langle\nabla f_i(x^0), v\rangle+\langle\xi^i, u\rangle.
	\end{equation}
	In particular,
	\begin{equation}\label{equa:21_new3}
	\lim_{k\to\infty}\dfrac{f_1(x^k)-f_1(x^0)}{\frac12t^2_k}=\langle\nabla f_1(x^0), v\rangle+\langle\xi^1, u\rangle.
	\end{equation}
	Since \eqref{equa:21_new2}, we have
	\begin{equation}\label{equa:21_new4}
	\langle\nabla f_1(x^0), v\rangle+\langle\xi^1, u\rangle<0.
	\end{equation}
	This and \eqref{equa:21_new3} imply that  
	$$f_1(x^k)-f_1(x^0)<0$$ 
	for all large enough $k$. Without loss of generality we may assume that 
	\begin{equation*}
	f_1(x^k)<f_1(x^0),\ \ \ \forall k\in\mathbb{N}.
	\end{equation*}
	For each $k\in \mathbb{N}$, put 
	$$I_k:= \{i\in I \ : \ i\geq 2\ \ \mbox{and} \ \ f_i(x^k)>f_i(x^0)\}.$$
	We claim that $I_k$ is nonempty for all $k\in\mathbb{N}$. Indeed, if $I_k=\emptyset$ for some $k\in \mathbb{N}$, then we have
	$$f_i(x^k)\leq f_i(x^0),\ \ \ \forall i=2, \ldots, l.$$
	Using also the fact that  $f_1(x^k)<f_1(x^0)$, we arrive at a contradiction with the efficiency of $x^0$.
	
	Since $I_k\subset \{2, \ldots, l\}$ for all $k\in\mathbb{N}$, without loss of generality, we may assume  that $I_k=\bar I$ is constant for all   $k\in \mathbb{N}$. By \eqref{equa:21_new33}, for each $i\in \bar I$, we have
	$$\langle\nabla f_i(x^0), v\rangle+\langle\xi^i, u\rangle\geq 0.$$
	Thus
	$$\langle\nabla f_i(x^0), v\rangle+\langle\xi^{*i}, u\rangle\geq \langle\nabla f_i(x^0), v\rangle+\langle\xi^i, u\rangle\geq 0.$$
	This and \eqref{equa:21_new22} imply that
	\begin{equation}\label{equa:21_new7}
	\langle\nabla f_i(x^0), v\rangle+\langle\xi^i, u\rangle=\langle\nabla f_i(x^0), v\rangle+\langle\xi^{*i}, u\rangle=0, 
	\end{equation}
	for all $i\in \bar I$. By \eqref{equa:21_new4}, we can choose $\delta\in \mathbb{R}$ such that
	\begin{equation*}
	\langle\nabla f_1(x^0), v\rangle+\langle\xi^1, u\rangle<\delta<0,
	\end{equation*}
	or, equivalently,
	\begin{equation*} 
	-[\langle\nabla f_1(x^0), v\rangle+\langle\xi^1, u\rangle]>-\delta>0.
	\end{equation*}
	From this and \eqref{equa:21_new3} it follows that there exists $k_0\in \mathbb{N}$ such that
	\begin{equation*}
	f_1(x^0)- f_1(x^k)>-\frac12\delta t^2_k>0,
	\end{equation*}
	for all $k\geq k_0$. Thus, for any $i\in \bar I$ and $k\geq k_0$, we have
	\begin{equation*}
	0< \dfrac{f_i(x^k)-f_i(x^0)}{f_1(x^0)- f_1(x^k)}\leq \dfrac{f_i(x^k)-f_i(x^0)}{-\frac12\delta t^2_k}. 
	\end{equation*}
	Combining this, \eqref{equa:21_new33} and \eqref{equa:21_new7}, we  deduce
	\begin{align*}
	0\leq \lim_{k\to\infty}\dfrac{f_i(x^k)-f_i(x^0)}{f_1(x^0)- f_1(x^k)}&\leq \lim_{k\to\infty}\dfrac{f_i(x^k)-f_i(x^0)}{-\frac12\delta t^2_k}
	\\
	&=-\frac{1}{\delta}[\langle\nabla f_i(x^0), v\rangle+\langle\xi^i, u\rangle]=0. 
	\end{align*}
	Thus
	$$\lim_{k\to\infty}\dfrac{f_1(x^k)-f_1(x^0)}{f_i(x^0)-f_i(x^k)}=+\infty,$$
	which contradicts that $x^0$ is a Geoffrion properly efficient solution to \eqref{problem}. 
\end{proof}

The following theorem gives some strong second-order Karush--Kuhn-Tucker necessary optimality conditions for Geoffrion properly efficient solutions of  problem \eqref{problem}. This result generalizes \cite[Theorem 3.2]{Kim17} to $C^{1,1}$ vector optimization problems. 
\begin{theorem}\label{KKT_necessary_II}Let $x^0\in Q_0$ be a Geoffrion properly efficient solution to problem \eqref{problem} and $u\in K(x^0)$. If   the \eqref{GSARC} holds at $x^0$ for the direction $u$, then so does  the $(SSKKT)$.
\end{theorem}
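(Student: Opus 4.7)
My approach is to combine an infeasibility result for the second-order lexicographic system---obtained by rerunning the machinery of Theorem \ref{strong_KKT_I} with the direction $u$ held fixed---with Motzkin's theorem of alternatives applied once per objective index.

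First, I would establish that the system
\begin{align*}
F^2_i(x^0;u,v)&\leqq_{\rm lex}(0,0),\ \ i\in I, \text{ strict for at least one } i,\\
G^2_j(x^0;u,v)&\leqq_{\rm lex}(0,0),\ \ j\in J(x^0),
\end{align*}
has no solution $v\in\mathbb{R}^n$. This is exactly the conclusion of Theorem \ref{strong_KKT_I}, but with $u$ fixed as our given critical direction, and it should be derivable under the weaker hypothesis (GSARC) at $u$ only. The proof of Theorem \ref{strong_KKT_I} specializes cleanly: the step that uses Theorem \ref{first_order-nec} (and hence (GAFORC)) to deduce that $u$ is critical is now vacuous, and all remaining ingredients carry over---namely, (GSARC) at $u$ to embed a purported $v$ in $T^2(M^{i_0};x^0,u)$ for the index $i_0$ carrying strict inequality, Taylor's formula via the second-order symmetric subdifferential to compute the asymptotics of $f_i(x^k)-f_i(x^0)$ along $x^k=x^0+t_ku+\tfrac12 t_k^2v^k\in M^{i_0}$, and the bounded-trade-off definition of Geoffrion proper efficiency to force the contradiction $\lim_{k}\frac{f_{i_0}(x^k)-f_{i_0}(x^0)}{f_i(x^0)-f_i(x^k)}=+\infty$.

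Second, since $u\in K(x^0)$, every $\langle\nabla f_i(x^0),u\rangle$ vanishes and $\langle\nabla g_j(x^0),u\rangle$ equals zero exactly for $j\in J(x^0;u)$ (and is strictly negative for the remaining $j\in J(x^0)$). Substituting into the definitions of $F^2_i$ and $G^2_j$, the lexicographic inequalities collapse to the ordinary affine inequalities
\begin{align*}
\langle\nabla f_i(x^0),v\rangle+\langle\xi^{*i},u\rangle&\leq 0,\ \ i\in I,\\
\langle\nabla g_j(x^0),v\rangle+\langle\zeta^{*j},u\rangle&\leq 0,\ \ j\in J(x^0;u),
\end{align*}
with the constraints for $j\in J(x^0)\setminus J(x^0;u)$ trivially satisfied; and we know this reduced linear system in $v$, with strict inequality required for at least one $i\in I$, has no solution.

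Third, for each fixed $i_0\in I$, I would apply Motzkin's theorem of alternatives to the sub-system where the strict inequality is placed specifically on index $i_0$: the sub-system has no solution, so there exist multipliers $\lambda^{(i_0)}_{i_0}>0$, $\lambda^{(i_0)}_i\geq 0$ for $i\neq i_0$, and $\mu^{(i_0)}_j\geq 0$ for $j\in J(x^0;u)$ satisfying $\sum_{i\in I}\lambda^{(i_0)}_i\nabla f_i(x^0)+\sum_{j\in J(x^0;u)}\mu^{(i_0)}_j\nabla g_j(x^0)=0$ together with $\sum_i\lambda^{(i_0)}_i\langle\xi^{*i},u\rangle+\sum_j\mu^{(i_0)}_j\langle\zeta^{*j},u\rangle\geq 0$. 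Summing over $i_0\in I$ and setting $\lambda_i:=\sum_{i_0}\lambda^{(i_0)}_i$, $\mu_j:=\sum_{i_0}\mu^{(i_0)}_j$ for $j\in J(x^0;u)$ and $\mu_j:=0$ otherwise, delivers all four conditions \eqref{equa:22new1}--\eqref{equa:22new3} of (SSKKT); the crucial strict positivity $\lambda_i\geq\lambda^{(i)}_i>0$ comes from the diagonal contribution.

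The main obstacle is the first step: it reuses essentially all of the analytic content of Theorem \ref{strong_KKT_I}, and one must check carefully that the argument really goes through when (GSARC) is available only at the single direction $u$ rather than at every critical direction. The Motzkin summation is a standard device but deserves a brief check of the degenerate situation where the underlying weak affine system in $v$ is itself infeasible, which can be handled by a direct Farkas certificate with the sign information supplied by Step~1.
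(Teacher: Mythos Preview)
Your proposal is correct and shares the overall two-stage structure with the paper's proof, but the duality step is genuinely different. The paper, like you, first records that the reduced affine system in $v$ (strict for at least one $i\in I$) is inconsistent; it obtains this by citing Theorem~\ref{strong_KKT_I} directly, whereas you observe more carefully that the proof of that theorem goes through with \eqref{GSARC} assumed only at the given $u$, since criticality of $u$ is now a hypothesis rather than a consequence of $(GAFORC)$. For the extraction of strictly positive multipliers, however, the paper does not run Motzkin once per index and sum: it homogenizes the inconsistent system by an auxiliary variable $\epsilon>0$, applies Slater's alternative to obtain either $\lambda>0$ outright or merely $\lambda\geqq0$, and in the latter case invokes Tucker's theorem together with Theorem~\ref{first_order-nec} to build a first-order descent direction $u(t)=\bar v+tu$, yielding a contradiction. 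Your Motzkin-and-sum device is more elementary and self-contained, and the diagonal trick $\lambda_i\geq\lambda^{(i)}_i>0$ is clean; the paper's Slater/Tucker route is shorter once those classical alternatives are available but leans on the first-order necessary condition. The degenerate situation you flag---the underlying weak affine system itself infeasible, i.e., $L^{2}(Q;x^0,u)=\emptyset$---is exactly where the paper falls back on Theorem~\ref{first_order-nec}; your proposed Farkas patch would need comparable care, since a bare Farkas certificate on an empty $L^{2}$ does not by itself force every $\lambda_i$ to be positive.
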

\begin{proof}  Suppose that $x^0$ is a Geoffrion properly efficient solution to problem \eqref{problem} and $u\in K(x^0)$. Thanks to Theorem \ref{strong_KKT_I}, the system 
	\begin{eqnarray*}
		\langle \nabla f_i(x^0), v\rangle+\langle \xi^{*i}, u\rangle&\leq 0,\ \ \ \ &i\in I,  
		\\
		\langle \nabla f_i(x^0), v\rangle+\langle \xi^{*i}, u\rangle&< 0,\ \ \ \ &\mbox{at least one } \ \ i\in I, 
		\\
		\langle \nabla g_j(x^0), v\rangle+ \langle \zeta^{*j}, u\rangle&\leq 0,\ \ \ \ &j\in J(x^0; u), 
	\end{eqnarray*}   
	has no solution $v\in\mathbb{R}^n$. This is equivalent to the inconsistency of the following system
	\begin{align}
	\langle \nabla f_i(x^0), v\rangle+\langle \xi^{*i}, u\rangle \epsilon&\leq 0,\ \ \ \ i=1, 2, ..., l, \label{equ:27}\\
	\langle \nabla f_i(x^0), v\rangle+\langle \xi^{*i}, u\rangle\epsilon&< 0 \ \ \ \ \text{at least one }\ \ i,\label{equ:28}\\
	\langle \nabla g_j(x^0), v\rangle+ \langle \zeta^{*j}, u\rangle\epsilon&\leq 0,\ \ \ \ j\in J(x^0; u),\label{equ:29}\\
	\epsilon&>0.\label{equ:29a}
	\end{align}
	Thus, by the Slater theorem \cite[p. 27]{Mangasarian69}, either the system
	\begin{align}
	&\sum_{i=1}^l\lambda_i\nabla f_i(x^0)+\sum_{j\in J(x^0; u)} \mu_j\nabla g_j(x^0)=0,\label{equ:30}\\
	&\sum_{i=1}^l\lambda_i\xi^{*i}+\sum_{j\in J}\mu_j\zeta^{*j}-\nu= 0,\label{equ:31}\\
	&\lambda_i> 0, i=1, 2, ..., l, \mu_j\geq 0, j\in J(x^0; u), \nu\geq 0, \label{equ:32}
	\end{align}
	has solution $\lambda_i, \mu_j, \nu\in\mathbb{R}$, or the system \eqref{equ:30}, \eqref{equ:31} and 
	\begin{equation}\label{equ:33}
	\lambda_i\geq 0, i=1, 2, ..., l, \mu_j\geq 0, j\in J(x^0; u), \nu\geq 0, ,
	\end{equation}
	has solution $\lambda_i, \mu_j, \nu\in\mathbb{R}$. We claim that the system \eqref{equ:30}--\eqref{equ:32} has solution  $\lambda_i, \mu_j, \nu\in\mathbb{R}$. On the contrary, suppose that the system  \eqref{equ:30}--\eqref{equ:32} has no solution. By the Tucker theorem \cite[p. 29]{Mangasarian69}, the system \eqref{equ:27}--\eqref{equ:29} and $\epsilon\geq 0$ has a solution $\bar v\in \mathbb{R}^n$ and $\bar \epsilon\geq 0$. It is easily  seen that $\bar \epsilon=0$. For each $t>0$, put $u(t)=\bar v+tu$. Since $(\bar v, 0)$ is a solution of the system \eqref{equ:27}--\eqref{equ:29}, for $t>0$ sufficiently large, we have
	\begin{align*}
	\langle \nabla f_i(x^0), u(t)\rangle&=\langle \nabla f_i(x^0), \bar v\rangle+t\langle \nabla f_i(x^0), u\rangle \leq 0, i=1, 2, ..., l, \\
	\langle \nabla f_i(x^0), u(t)\rangle&=\langle \nabla f_i(x^0), \bar v\rangle+t\langle \nabla f_i(x^0), u\rangle < 0, \ \ \text{at least one}\ \ i,\\
	\langle \nabla g_j(x^0), u(t)\rangle&=\langle \nabla g_j(x^0), \bar v\rangle+t\langle \nabla g_j(x^0), u\rangle \leq 0, j\in J(x^0; u),\\
	\langle \nabla g_j(x^0), u(t)\rangle&=\langle \nabla g_j(x^0), \bar v\rangle+t\langle \nabla g_j(x^0), u\rangle < 0, j\in J(x^0)\setminus J(x^0; u),
	\end{align*}
	by Theorem \ref{first_order-nec}, which contradicts the fact that $x^0$ is a Geoffrion properly efficient solution of $(P)$. 
\end{proof}

Let us illustrate Theorem \ref{KKT_necessary_II}.
\begin{example}{\rm
	Let $f\colon \mathbb{R}^2\to \mathbb{R}^2$ be defined by $f(x)=(f_1(x), f_2(x))$, where
	$$ f_1(x)=-f_2(x)=
	\begin{cases} 2x^2_1+x^2_2+x_2+x^2_1\sin(\ln|x_1|), &\text{ if  } x=(x_1, x_2)\in \mathbb{R}^2,  x_1\neq 0,
	\\
	x^2_2+x_2,  & \text{ if } x_1=0,
	\end{cases}
	$$
	and let $g_1\colon\mathbb{R}^2\to \mathbb{R}$ be given by $g_1(x)=-x_2$ for $x=(x_1, x_2)\in\mathbb{R}^2$. Let us consider problem \eqref{problem} with the objective function $f$ and the constraint set 
	$$Q_0=\{x\in \mathbb{R}^2\;:\; g_1(x)\leq 0\}.$$
	It is easy to check that $x^0:=(0, 0)$ is a Geoffrion properly efficient solution to problem \eqref{problem}. An easy computation shows that
	$$
	\nabla f_1(x)=
	\begin{cases}
	\left(4x_1+2x_1\sin (\ln |x_1|)+x_1\cos(\ln|x_1|), 2x_2+1\right),&\mbox{  if } x_1\neq 0\\
	(0, 2x_2+1),&\mbox{ if } x_1=0,
	\end{cases}
	$$
	$\nabla f_2(x)=-\nabla f_1(x)$, $\nabla g_1(x)=(0, -1)$, and $\nabla^2g_1(x)=(0,0)$ for $x\in\mathbb{R}^2$. Thus
	$$\nabla f_1(x^0)=-\nabla f_2(x^0)=-\nabla g_1(x^0)=(0, 1),$$
	and $K(x^0)=\{(u_1, u_2)\in\mathbb{R}^2\;:\; u_2=0\}$. Let $u=(1, 0)$ be a critical direction at $x^0$. Then, we have
	$$
	\partial_S^2f_1(x^0)(u)=\partial_S\langle u, \nabla f_1(\cdot)\rangle(x^0)
	=\partial_S(\nabla_{x_1} f_1(\cdot))(x^0)
	=\partial_S(\nabla_{x_1} f_1(\cdot))(x^0).
	$$
	Since $\nabla_{x_1}f_1(\cdot)$ does not depend on $x_2$, we have $\partial_S(\nabla_{x_1} f_1(\cdot))(x^0)=(\partial_S \varphi(0), 0)$, where $\varphi(x_1):=\nabla_{x_1} f_1(x)$ for all $x=(x_1, x_2)\in\mathbb{R}^2$. Thus $\partial_S^2f_1(x^0)(u)=(\partial_S \varphi(0), 0)$.
	Thanks to \cite[Corollary 2.3]{Borw94}, one has
	$$
	\partial_S\varphi(0)=\partial_C\varphi(0)=\left[\liminf_{x_1\to 0, x_1\neq 0} \nabla \varphi(x_1), \limsup_{x_1\to 0, x_1\neq 0}\nabla \varphi(x_1)\right].
	$$
	From
	$\varphi(x_1)=\nabla_{x_1} f_1(x)=4x_1+2x_1\sin (\ln |x_1|)+x_1\cos(\ln|x_1|)$  
	it follows that
	$\nabla \varphi(x_1)=4+\sin(\ln|x_1|)+3\cos(\ln|x_1|)$ for all $x_1\neq 0$. It is easy to check that
	$$
	\liminf_{x_1\to 0, x_1\neq 0} \nabla \varphi(x_1) =4-\sqrt{10} \ \ \mbox{and} \ \ \limsup_{x_1\to 0, x_1\neq 0}\nabla \varphi(x_1)=4+\sqrt{10}.
	$$
	Consequently,
	$$\partial_S^2f_1(x^0)(u)=[4-\sqrt{10}, 4+\sqrt{10}]\times\{0\},$$
	and hence $\partial_S^2f_2(x^0)(u)=[-4-\sqrt{10}, -4+\sqrt{10}]\times\{0\}$. It follows that
	$$\langle\xi^{*1}, u\rangle=\max \left\{\langle \xi^1, u\rangle\,:\, \xi^1\in \partial_S^2f_1(x^0)(u)\right\}=4+\sqrt{10},$$
	and 
	$$\langle\xi^{*2}, u\rangle=\max \left\{\langle \xi^2, u\rangle\,:\, \xi^2\in \partial_S^2f_2(x^0)(u)\right\}=-4+\sqrt{10}.$$
	Thus, for each $v=(v_1, v_2)\in \mathbb{R}^2$, we have
	$$F^2_1(x^0; u,v)=(0, v_2+4+\sqrt{10}), F^2_2(x^0; u,v)=(0, -v_2-4+\sqrt{10}), $$
	and $G_1^2(x^0; u,v)=(0, -v_2)$. It is easily seen that $L^2(Q; x^0, u)=\emptyset$. Thus, the \eqref{GSARC} holds at $x^0$. By Theorem \ref{KKT_necessary_II}, the $(SSKKT)$ holds at $x^0$ for the direction $u$. Moreover, it is easy to check that $(\lambda_1, \lambda_2)=(1, 1)$ and $\mu=0$ satisfy the conditions \eqref{equa:22new1}--\eqref{equa:22new3}. Since the functions $f_1$ and $f_2$ are not twice differentiable at $x^0$, Theorem 3.2 in \cite{Kim17} cannot be employed.}
\end{example}
\begin{remark}{\rm
	\begin{enumerate}
		\item [(i)] It is worth noting that Theorem \ref{KKT_necessary_II}  embraces also the $(SFKKT)$ optimality conditions given by Burachik and Rizvi \cite[Theorem 4.4]{Rizvi12}, which can be obtained by just considering the particular case $u=0$.
		
		\item [(ii)] Due to Remark \ref{relation_SORC}, the conclusions of Theorem \ref{strong_KKT_I} and Theorem \ref{KKT_necessary_II}  still hold when the assumption ``{\em the \eqref{GSARC} holds at $x^0$}'' is replaced by ``{\em the \eqref{SARC} holds at $x^0$}''. 
		\item [(iii)] Theorem \ref{strong_KKT_I} and Theorem \ref{KKT_necessary_II} cannot be extended to the efficient solution of \eqref{problem}. To see this, let us consider the following example.
	\end{enumerate}  }
\end{remark}

\begin{example}\label{not_Geoffrion}{\rm Consider problem \eqref{problem} with the objective function $f$ from Example \ref{strict_inclusion} and the constraint set $Q_0:=\mathbb{R}^2$. It is easy to check that $x^0:=(0, 0)$ is an efficient solution to problem $\rm(VP)$ and $Q=\{x^0\}$.   Since
	$$\nabla f_1(x^0)=(0, 1), \nabla f_2(x^0)=(1, 0), \nabla f_3(x^0)=(-1, 0),$$
	we have
	\begin{equation*}
	L^2(Q; x^0, 0_{\mathbb{R}^2})=L(Q; x^0)=\{(u_1, u_2)\,:\, u_1=0,  u_2\leq 0\}.
	\end{equation*}
	Thanks to Example \ref{strict_inclusion}, we have $K(x^0)=\{0_{\mathbb{R}^2}\}$ and
	\begin{align*}
	&L^2(Q; x^0, 0_{\mathbb{R}^2})=L(Q^0; x^0)=\bigcap_{i=1}^3 T(M^i; x^0)=\bigcap_{i=1}^3 T^2(M^i; x^0, 0_{\mathbb{R}^2}),
	\\
	&L^2(Q; x^0, 0_{\mathbb{R}^2})=L(Q^0; x^0)\nsubseteq \bigcap_{i=1}^3 T(Q^i; x^0)= \bigcap_{i=1}^3 T^2(Q^i; x^0, 0_{\mathbb{R}^2}), 
	\end{align*}
	where $0_{\mathbb{R}^2}:=(0,0)$. Thus the \eqref{GSARC} holds at $x^0$ but not the \eqref{SARC}. Since
	$$\lambda_1\nabla f_1(x^0)+\lambda_2\nabla f_2(x^0)+\lambda_3\nabla f_3(x^0)=0 \Leftrightarrow 
	\begin{cases}
	\lambda_1=0
	\\
	\lambda_2=\lambda_3,
	\end{cases}
	$$ 
	it follows that the $(SSKKT)$ does not hold at $x^0$ for the direction $0_{\mathbb{R}^2}$. Thus Theorem \ref{KKT_necessary_II} does not hold for  $x^0$.  Furthermore, for each $v=(v_1, v_2)\in \mathbb{R}^2$, we have
	\begin{equation*}
	F_1^2(x^0; 0_{\mathbb{R}^2}, v)=(0, v_2), F_2^2(x^0; 0_{\mathbb{R}^2}, v)=(0, v_1), F_3^2(x^0; 0_{\mathbb{R}^2}, v)=(0, -v_1).
	\end{equation*}
	Clearly, the system 
	\begin{equation*}
	\begin{cases}
	F_1^2(x^0; 0_{\mathbb{R}^2}, v) = (0, v_2) &<_{\rm lex} (0,0)
	\\
	F_2^2(x^0; 0_{\mathbb{R}^2}, v)= (0, v_1) &\leqq_{\rm lex} (0,0)
	\\
	F_3^2(x^0; 0_{\mathbb{R}^2}, v)=(0, -v_1)&\leqq_{\rm lex} (0,0),
	\end{cases}
	\end{equation*}
	admits a solution $v=(0, v_2)$, where $v_2<0$. This means that Theorem \ref{strong_KKT_I} does not hold for efficient solutions, too.
	
	We also note here that $x^0$ is not a Geoffrion properly efficient solution.  Indeed, let $x=(0, -a)$, $a>0$, then we have $f_1(x)<f_1(x^0), f_2(x)>f_2(x^0)$  and 
	\begin{equation*}
	\lim_{a\downarrow 0}\dfrac{f_1(x)-f_1(x^0)}{f_2(x^0)-f_2(x)}=\lim_{a\downarrow 0}\dfrac{1}{a}= +\infty.
	\end{equation*}
	Therefore $x^0$ is not a Geoffrion properly efficient solution.}
\end{example}
\section{Conclusions}
\label{Conclusion}
By using the second-order symmetric subdifferential, we introduce some types of  second-order  regularity conditions in the sense of Abadie and formulate strong second-order Karush--Kuhn--Tucker   necessary  optimality conditions for Geoffrion properly efficient solutions of $C^{1,1}$ vector optimization problems. It is meaningful that how to establish second-order Karush--Kuhn--Tucker-type optimality conditions for efficient solutions of  vector optimization problems purely in the Mordukhovich subdifferential terms. We aim to investigate this problem in future work.



\begin{thebibliography}{99}
	\bibitem{hien84}  Hiriart-Urruty JB,   Strodiot JJ, Nguyen VH. Generalized Hessian matrix and second-order optimality conditions for problems with $C^{1,1}$ data. { Appl. Math. Optim.} 1984;11:43--56.
	
	\bibitem{Jahn04} Jahn J. Vector optimization. New York (NY): Springer; 2004.
	
	\bibitem{Kuhn52} Kuhn HW, Tucker AW. Nonlinear programming. In: Neyman J, editor. Proceedings of the Second Berkeley Symposium on Mathematical Statistics and Probability; 1950 Jul 31-Aug 12; Berkeley (CA): University of California press; 1951. p. 481--492.
	
	\bibitem{Singh87} Singh, C. Optimality conditions in multiobjective differentiable programming. J. Optim. Theory Appl. 1987;53:115--123.
	
	\bibitem{Maeda94} Maeda T. Constraint qualification in multiobjective optimization problems: differentiable case. J. Optim. Theory Appl. 1994;80:483--500.
	
	\bibitem{Preda99} Preda  V, Chi{\c{t}}escu I.  On constraint qualification in multiobjective optimization problems: semidifferentiable case. J. Optim. Theory Appl. 1999;100:417--433.
	
	\bibitem{Giorgi 09} Giorgi G, Jim\'enez B, Novo V. Strong Kuhn-Tucker conditions and constraint qualifications in locally Lipschitz multiobjective optimization problem. Top. 2009;17:288--304.
	
	\bibitem{Golestani13} Golestani M, Nobakhtian S. Nonsmooth multiobjective programming: strong KuhnTucker conditions. Positivity. 2013;17:711--732.
	
	\bibitem{Rizvi12} Burachik RS, Rizvi MM.  On weak and strong Kuhn-Tucker conditions for smooth multiobjective optimization. J. Optim. Theory Appl. 2012;155:477--491. 
	
	\bibitem{Chuong14} Chuong TD, Yao JC. Isolated and proper efficiencies in semi-infinite vector optimization problems. J. Optim. Theory Appl. 2014;162:447--462.
	
	\bibitem{Wang91} Wang S. Second order necessary and sufficient conditions in multiobjective programming. { Numer. Funct. Anal. Optim.} 1991;12:237--252.
	
	\bibitem{Bigi00}  Bigi G, Castellani M.  Second order optimality conditions for differentiable multiobjective problems. { RAIRO Oper. Res.} 2000;34:411--426.
	
	\bibitem{Bigi04} Bigi G, Castellani M. Uniqueness of KKT multipliers in multiobjective optimization. { Appl. Math. Lett.} 2004;17:1285--1290.
	
	\bibitem{Aghezzaf99} Aghezzaf B, Hachimi M. Second-order optimality conditions in multiobjective optimization problems. J. Optim. Theory Appl. 1999;102:37--50.
	
	\bibitem{Aghezzaf07} Aghezzaf B, Hachimi M. New results on second-order optimality conditions in vector optimization problems. J. Optim. Theory Appl. 2007;135:117--133.
	
	\bibitem{Guerraggio01} Guerraggio  A, Luc  DT, Minh  NB. Second-order optimality conditions for $C^1$ multiobjective programming problems. Acta Math. Vietnam. 2001;26:257--268.
	
	\bibitem{Khanh16} Khanh PQ, Tung NM. Second-order conditions for open-cone minimizers and firm minimizers in set-valued optimization subject to mixed constraints. {J. Optim. Theory Appl.} 2016;171:45--69.
	
	\bibitem{novo03} Jim\'enez  B,   Novo  V. Second order necessary conditions in set constrained differentiable vector optimization. { Math. Methods Oper. Res.} 2003;58:299--317.
	
	\bibitem{novo09} Guti\'errez C, Jim\'enez  B,   Novo  V. New second-order directional derivative and optimality conditions in scalar and vector optimization. {J. Optim. Theory Appl.} 2009;142:85--106.
	
	\bibitem{Tuan15}  Tuan  ND.  First and second-order optimality conditions for nonsmooth vector optimization using set-valued directional derivatives. Appl. Math. Comput. 2015;251:300--317.
	
	\bibitem{Tuan16} Tuan ND. On necessary optimality conditions for nonsmooth
	vector optimization problems with mixed constraints in infinite dimensions. { Appl. Math. Optim.} 2016. DOI:10.1007/s00245-016-9383-z 
	
	\bibitem{Maeda04} Maeda T. Second-order conditions for efficiency in nonsmooth multiobjective optimization. { J. Optim. Theory Appl.} 2004;122:521--538.
	
	\bibitem{Kim17} Kim DS, Tuyen NV. A note on  second-order Karush--Kuhn--Tucker necessary   optimality conditions for smooth vector optimization problems. DOI:10.1051/ro/2017026
	
	\bibitem{Rizvi06} Rizvi  MM, Nasser M. New second-order optimality conditions in multiobjective optimization problems: differentiable case. J. Indian Inst. Sci. 2006;86:279--286.
	
	\bibitem{Huy16} Huy  NQ, Tuyen  NV. New second-order optimality conditions for a class of differentiable optimization problems. J. Optim. Theory Appl. 2016;171:27--44.
	
	\bibitem{Mor06a} Mordukhovich BS. Variational analysis and generalized differentiation, I: basic theory, 	Grundlehren series (fundamental principles of mathematical sciences). Vol. 330. Berlin: Springer; 2006.
	
	\bibitem{Clarke83} Clarke FH. Optimization and nonsmooth analysis. Philadelphia (PA): SIAM; 1990.
	
	\bibitem{Geoffrion68} Geoffrion  AM. Proper efficiency and the theory of vector maximization. J. Math. Anal. Appl. 1968;22:618--630.
	
	\bibitem{Mangasarian69} Mangasarian OL. Nonlinear programming. New York (NY): McGraw-Hill; 1969.
	
	\bibitem{Borw94} Borwein JM.  Fitzpatrick S. Characterization of Clarke subgradients among one-dimensional multifunctions. In: Glover BM, Jeyakumar V, editors.   Proceedings of the Optimization Miniconference II; 1994 Jul 14; Sydney (NSW): University of New South Wales; 1995. p. 61--64. 
	
	
\end{thebibliography}
\end{document}